\newtheorem{theorem}{Theorem}[section]
\newtheorem{lemma}[theorem]{Lemma}
\newtheorem{cor}[theorem]{Corollary}
\newtheorem{prop}[theorem]{Proposition}
\theoremstyle{definition}
\theoremstyle{remark}
\newtheorem{remark}{Remark}[section]
\def\R{{\mathbb R}}
\def\C{{\mathbb C}}
\def\Symtwor{\rm Sym(2,\R)}
\def\Gldr{GL(d,\R)}
\def\Spdr{Sp(d,\R)}
\def\spnr{{\mathfrak {sp}}(d,\R)}
\def\sotwo{{\mathfrak {so}}(2)}
\def\gldr{{\mathfrak {gl}}(d,\R)}
\def\gltwonr{{\mathfrak {gl}}(2d,\R)}
\def\Spnr{Sp(d,\R)}
\def\Sptwor{Sp(2,\R)}
\def\Symdr{\rm Sym(d,\R)}
\def\Symtwor{\rm Sym(2,\R)}
\def\gh{{\mathfrak h}}
\def\gk{{\mathfrak k}}
\def\gl{{\mathfrak l}}
\def\gq{{\mathfrak q}}
\def\sp{\mathop{\rm span}}
\def\trace{\mathop{\rm tr}}
\def\eps{\varepsilon}
\def\t{\!\;^t\!}   
\def\cB{{\mathcal B}}
\def\cE{{\mathcal E}}
\def\cT{{\mathcal T}}
\begin{document}

\title[Reproducing subgroups]{Reproducing subgroups  of $Sp(2,\R)$.\\ Part I:  algebraic classification}


\author{G.~S.~Alberti}
\address{G.~S.~Alberti\\
Mathematical Institute\\
24-29 St Giles'\\
Oxford\\
OX1 3LB, England}
\email{Giovanni.Alberti@maths.ox.ac.uk }
\author{L.~Balletti}
\address{L. Balletti, CNR-PSC\\
Corso F. Perrone 24\\
16152  Genova, Italy}
\author{F.~De~Mari}
\address{F. De Mari, DIMA\\
Via Dodecaneso, 35
\\16146 Genova, Italy}
\email{demari@dima.unige.it}
\author{E.~De Vito}
\address{E. De Vito, DIMA \and INFN - Sezione di Genova\\
Via Dodecaneso, 35
\\16146 Genova, Italy}
\email{devito@dima.unige.it}

\subjclass[2010]{Primary: 22E15,43A80}
\keywords{symplectic group, metaplectic representation, semidirect product}

\date{November 8, 2012}

\dedicatory{}

\begin{abstract}We classify the connected Lie subgroups of  the symplectic group $\Sptwor$ whose elements 
are matrices in block lower triangular form. The classification is up to conjugation within $\Sptwor$. Their study is motivated by the need of a unified approach to continuous 2D signal analyses, as those provided by wavelets and shearlets.
\end{abstract}

\maketitle
\section{Introduction} The continuous wavelet transform \cite{Dau92,fuhr05,mallat09,CoMe97} and its many variants, such as, for example, the shearlet transform \cite{dakustte09,dastte10,gukula06,kula09}, lie in the background of a growing body of techniques, that may be collectively referred to as {\it signal analysis}, whose common feature is perhaps the decomposition of functions, primarily in $L^2(\R^d)$, by means of superpositions of projections along selected ``directions''.  Symmetry and finite dimensional geometry often play a prominent r\^ole  in the way in which these directions are generated or selected, and hence, with this notion of signal analysis, topological transformation groups and their representations provide a natural setup. In particular, the restriction of the metaplectic representation of $\Spdr$ to its Lie subgroups produces a wealth of useful reproducing formulae \cite{codenota06b,codenota06a}, all based on linear geometric actions either in the time or in the frequency domain, and is thus 
one of the most natural environments both for a unified approach and for the search of new 
strategies. In fact, the deep connections of the metaplectic representation with harmonic analysis in phase space is thoroughly investigated \cite{fol89,gro01}, and one of the keys to its understanding is the Wigner transform.

The central importance of the symplectic group  has motivated both a
general theory of ``mock'' metaplectic representations (and the
abstract harmonic analysis thereof  \cite{DeDe11}), and a more
applications-oriented approach, where the main focus  is the actual
study of these formulae in connection with the classical themes of
signal analysis \cite{ober10}. In this work, that consists of two
parts, we introduce the class $\cE$ of Lie subgroups of $\Spdr$ that
we believe is the ``right'' class  for signal analysis and we
illustrate its relevance in $2D$-analysis by exhibiting the full list
of reproducing formulae that it yields, up to the appropriate notion
of equivalence. In some sense, therefore, we obtain a complete
picture, at least as far as continuous ``geometric''  transforms are
concerned, of   reasonable $2D$ signal analyses. 
In the first part (this paper) we classify the groups,  modulo conjugation within $Sp(2,\R)$. In part~II we address the analytic issues: by appealing to the theory developed in \cite{DeDe11} we are able to show exactly which groups are reproducing and which are not. The  full description of the associated admissible vectors is also achieved.

 Some other interesting examples of signal analysis
  associated with the metaplectic representation in higher dimensions can
  be found in a recent paper \cite{czki12}. 

We  say that a Lie subgroup $G$ of $\Spdr$ is a {\it reproducing group} if there exists a function $\eta\in L^2(\R^d)$, to be called an {\it admissible vector},  such that  the  {\it reproducing formula}
\begin{equation*}
f=\int_G\langle f,\mu_g\eta\rangle\mu_g\eta\,dg,
\label{RF}
\end{equation*}
holds (weakly) for every $f\in L^2(\R^d)$, where $dg$ is a left Haar measure of $G$ and $\mu$ is the metaplectic representation restricted to $G$. For simplicity, we actually restrict ourselves to connected subgroups.  As pointed out in previous work \cite{codenota06b,codenota06a,DeDe11}, many known continuous formulae (notably those associated to wavelets, shearlets and some of their variants) arise in this way, or are at least   equivalent to them via natural intertwining operators such as the Fourier transform, perhaps combined with geometric (affine) transformations of phase space.  But much more is true. All the reproducing groups that we are aware of, share a structural feature:  they are block triangular\footnote{By conjugating with a suitable permutation one can either adopt the lower or upper triangular shape, as desired.}  semidirect products of a particular type. Written as $d\times d$ blocks, their elements have the form
\begin{equation*}
g(\sigma,h)=
\begin{bmatrix}h&0\\\sigma h&\,^th^{-1}\end{bmatrix}
\label{lowertriangular}
\end{equation*}
where $\sigma$ ranges in a nontrivial vector space $\Sigma$ of symmetric $d\times d$ matrices (the {\it vector} components) and  $h$ ranges, independently of $\sigma$,  in a nontrivial  connected Lie subgroup $H$ of $\Gldr$ (the {\it homogeneous}  component),  that acts on $\Sigma$ via
$$
h^{\dagger}[\sigma]=\,^th^{-1}\sigma h^{-1}.
$$
From the point of view of analysis, one should think of $\Sigma$ as
encoding translations and $H$ as the group of geometric
``deformations'' such as, for example, shearings or possibly
anisotropic dilations, or combinations of both.  Thus, a group in the
class $\cE$ is, by definition, a connected semidirect product
$G=\Sigma\rtimes H$.  All these groups lie inside the standard maximal
parabolic subgroup $Q$ of $\Spdr$ described in \eqref{maxPAR}, but, in
general, they are not parabolic, nor do they fill up the class of
connected Lie subgroups of $Q$, as we show below. As one realizes
after the classification, $\cE$ is a nontrivial class, perhaps
not much smaller than the family including all reproducing subgroups of
$\Spdr$: we actually conjecture that if $G$ is a connected reproducing
subgroup of $\Spdr$, then, modulo extensions by compact factors, $G$
is conjugate within $\Spdr$ to a Lie subgroup of $Q$. Furthermore, at
the present stage, the theory initiated in~\cite{DeDe11} treats only
semidirect products and is thus not applicable to the full class of
Lie subgroups of $Q$, which is described in
Proposition~\ref{triplette}. We therefore content ourselves with those
in the class $\cE$, which is actually rich enough to contain both
significant known cases and several new examples, at times surprising.

In the two papers, of which this is the first, we accomplish one of the main objectives of our research project,  namely the classification, for $d=2$,  of all the reproducing groups in $\cE$, together with the relevant analytic information. The classification we are after, of course, must be done modulo some reasonable and pertinent notion of equivalence. This is a rather delicate issue, as we now illustrate, and is one of the central points of our work.
The most natural notion of equivalence is algebraic. In Lie theoretic terms, it is just conjugation modulo $MA$, where $MAN$ is the Langlands decomposition of $Q$. The matrices in $MA$ are the block diagonal elements in $Q$ and conjugation by them preserves the class $\cE$. As explained in Proposition~\ref{conjugation}, every $y\in MA$ sends any $G\in\cE$ into $yGy^{-1}\in\cE$ and actually maps  vector components into vector components (i.e. $\Sigma$ to $\Sigma'$, because $MA$ normalizes $N$) and homogeneous components into homogeneous components (i.e. $H$ to $H'$, because $MA$ normalizes itself). No other symplectic matrix has this property on all of $\cE$. Furthermore, this equivalence yields the equivalence of the restrictions of the metaplectic representation,  groups in the same equivalence class are either all  reproducing or none of them is,  and the sets of admissible vectors in a reproducing class are in one-to-one correspondence via the unitary equivalences induced by $\mu(y)$.

Another natural equivalence is conjugation by {\it any}  element in $\Spdr$. It is very important because, although not adapted to $\cE$, any conjugation induces equivalence of the restrictions of the metaplectic representation, and  transfers the reproducing property, with  admissible vectors that correspond to eachother via  natural unitary equivalence. In Section~\ref{full} we analyze in full detail this general conjugation problem and we finally prove the  classification, which is our main result and is stated in the next section.

\section{Setting and main results}
We first introduce some notation.  The letters $g,h$ are to be
regarded as invertible matrices and $\sigma$ as a symmetric matrix.
We write
\begin{align*}
g^{\sharp}&=\,^tg^{-1}\\ 
i_g(h)&=ghg^{-1}\\
g^{\dagger}[\sigma]&
=\,^tg^{-1}\sigma g^{-1}.   
\end{align*}
If $G$ is Lie group, the connected component of the identity will be denoted $G^0$.

In this paper, we focus on a class $\cE$ of subgroups of the symplectic group 
$$
\Spdr=\{g\in GL(2d,\R):\,^tgJg=J\},
$$
where
$J=\left[\begin{smallmatrix} O &I\\-I&0\end{smallmatrix}\right]$ and
$I$ is the identity matrix, whose size will be clear from the context.
We look at block lower triangular matrices  of the form
\begin{equation*}
g(\sigma,h):=\begin{bmatrix}
    h & 0 \\
   \sigma\,h & h^\sharp 
  \end{bmatrix}\in \Spdr,
\label{elem}
\end{equation*}
where $\sigma\in \Symdr$,  the vector space of $d\times d$ real
symmetric matrices, and $h\in GL(d,\R)$.  We say that the group $G$ is
in the class $\cE$  if there exist a non-zero vector subspace $\Sigma$
of $\Symdr$ and a connected nontrivial Lie subgroup $H$ of $GL(d,\R)$ such that
\[
G=\{
  g(\sigma,h)  \,:\, \sigma\in\Sigma, h\in H\}.
\]
By construction,  $g(\Sigma,I)$ is an abelian normal Lie subgroup of
$G$ and is isomorphic to  $\Sigma$, $g(0,H)$ is a Lie subgroup of $G$ and is
isomorphic to $H$, and  $G$ is the semi-direct product of
$g(\Sigma,I)$ and $g(0,H)$.  Under this canonical identification, we write
$G=\Sigma\rtimes H$.  In Appendix~\ref{sec:parabolic} we further comment on the r\^ole
of $\cE$ in connection with the algebraic structure  of $\Spdr$.

We now introduce some specific notation needed for the  classification
of  the groups in the class $\cE$ with $d=2$.  We shall use the following
symmetric matrices
\begin{equation}
    \sigma_1=\begin{bmatrix}
               1 & 0 \\
               0 & 1 \\
             \end{bmatrix},\quad  \sigma_2=\begin{bmatrix}
               1 & 0 \\
               0 & -1 \\
             \end{bmatrix},\quad  \sigma_3=\begin{bmatrix}
               1 & 0 \\
               0 & 0 \\
             \end{bmatrix}
             ,\quad \sigma_4=\begin{bmatrix}
               0 & 0 \\
               0 & 1 \\
             \end{bmatrix}
             ,\quad \sigma_5=\begin{bmatrix}
               0 & 1 \\
               1 & 0 \\
             \end{bmatrix}.
             \label{5sigma}
\end{equation}
For $i=1,2,3$ we write $\Sigma_i=\sp\{\sigma_i\}$ for the corresponding
one dimensional subspace. Each of them defines a two
dimensional subspace,  the orthogonal complement with respect to
the scalar product induced by the trace on $\Symtwor$. Explicitly:
\begin{align*}
&\Sigma_1^\perp=\bigl\{\begin{bmatrix}
u &v \\
v&-u \\
\end{bmatrix}:u,v\in\R\bigr\}=\sp\{\sigma_2,\sigma_5\},\\
&\Sigma_2^\perp=\bigl\{\begin{bmatrix}
u &v \\
v&u \\
\end{bmatrix}:u,v\in\R\bigr\}=\sp\{\sigma_1,\sigma_5\},\\
&\Sigma_3^\perp=\bigl\{
\begin{bmatrix}
                           0 & v\\
                            v & u \\
                          \end{bmatrix}:
                          u,v\in\R\bigr\}=\sp\{\sigma_4,\sigma_5\}.
                         \end{align*}
For $t\in\R$ we set
\begin{equation*}
     R_t=\exp(tJ)=\begin{bmatrix}
               \cos t & \sin t \\
               -\sin t & \cos t \\
             \end{bmatrix},\quad  A_t=\exp(t\sigma_5)=\begin{bmatrix}
               \cosh t & \sinh t \\
               \sinh t & \cosh t \\
             \end{bmatrix}.
\end{equation*}
The notation relative to the connected Lie subgroups of $GL(2,\R)$ is as follows:
\begin{itemize}\addtolength{\itemsep}{0.1\baselineskip}
\item[] $SO(2)=\{R_\theta:\theta\in [0,2\pi )\}$
\item[] $SO^0(1,1)=\{A_t:t\in\R\}$
  \item[] $T=\bigl\{\left[\begin{smallmatrix}
c&0 \\
b&a \\
\end{smallmatrix}\right]
:a,b,c\in\R,\;ac\not=0\bigr\}$
  \item[] $H^0(\sigma_1)=SO(2)\times\R_+$
  \item[] $H_\infty(\sigma_1)=SO(2) $
  \item[] $H_\alpha(\sigma_1)=\{e^tR_{\alpha t}:t\in\R\},\qquad\alpha\in\R$
  \item[] $H^0(\sigma_2)=SO^0(1,1)\times\R_+$
  \item[] $H_\infty(\sigma_2)=SO^0(1,1)$
  \item[] $H_\alpha(\sigma_2)=\{e^t A_{\alpha t}:t\in\R\},\qquad\alpha\in\R$
  \item[] $H^0(\sigma_3)=T^0$
  \item[] $H_0(\sigma_3)=
\bigl\{\left[\begin{smallmatrix}
1 & 0 \\
t & 1 \\
\end{smallmatrix}\right]
:t\in\R\bigr\}$
  \item[] $H_1(\sigma_3)=\bigl\{e^t\left[\begin{smallmatrix}
                                         1 & 0 \\
                                         t & 1 \\
                                       \end{smallmatrix}\right]
:t\in\R\bigr\}$
  \item[] $H_\infty(\sigma_3)=\bigl\{e^t\left[\begin{smallmatrix}
                                         1 & 0 \\
                                         0 & 1 \\
                                       \end{smallmatrix}\right]
:t\in\R\bigr\}$
  \item[] $H_{\gamma,0}(\sigma_3)=\bigl\{\left[\begin{smallmatrix}
                                         e^{\gamma t} & 0 \\
                                         0 & e^{(\gamma+1)t} \\
                                       \end{smallmatrix}\right]
:t\in\R\bigr\},\qquad\gamma\in\R$
\item[] $K_0(\sigma_3)=
\bigl\{\left[\begin{smallmatrix}
e^t & 0 \\
0 & e^s \\
\end{smallmatrix}\right]
:s,t\in\R\bigr\}$
\item[] $K_\infty(\sigma_3)=\bigl\{\left[\begin{smallmatrix}
                                         e^t & 0 \\
                                         s & e^t \\
                                       \end{smallmatrix}\right]
:s,t\in\R\bigr\}$
\item[] $L_{\gamma}(\sigma_3)=\bigl\{\left[\begin{smallmatrix}
                                         e^{\gamma t} & 0 \\
                                         s & e^{(\gamma+1)t} \\
                                       \end{smallmatrix}\right]
:s,t\in\R\bigr\},\qquad\gamma\in\R.$
\end{itemize}

The following theorem gives, up to a conjugation within
$\Sptwor$, a complete list of the groups in $\cE$ when
 $1\leq\dim\Sigma\leq2$.  As shown by Theorem~1 of~\cite{DeDe11},  a group $G\in\cE$ with a  normal subgroup $\Sigma$ of dimension 3 is never reproducing, hence we avoid to
classify such groups.

\begin{theorem}\label{main}
 Let $d=2$ and take $G=\Sigma\rtimes H\in\cE$ with $1\leq\dim\Sigma\leq2$.
Then there exists $g\in Sp(2,\R)$ such that $gGg^{-1}$ is one the
following groups, each of which is in $\cE$ and none of which is conjugate to any other.\\
Two-dimensional groups:
\begin{enumerate}[(2.1)]
\item $\Sigma_1\rtimes H_\alpha(\sigma_1)$, $\alpha\in[0,+\infty]$
\item $\Sigma_2\rtimes H_\alpha(\sigma_2)$, $\alpha\in[0,+\infty]$
\item $\Sigma_3\rtimes H_0(\sigma_3)$
\item  $\Sigma_3\rtimes H_1(\sigma_3)$
\item $\Sigma_3\rtimes H_{\alpha,0}(\sigma_3)$, $\alpha\in[-1,0]$
\end{enumerate}
Three-dimensional groups:
\begin{enumerate}[(3.1)]
\item $\Sigma_1\rtimes H^0(\sigma_1)$
\item $\Sigma_2\rtimes H^0(\sigma_2)$
\item $\Sigma_3\rtimes K_0(\sigma_3)$
\item $\Sigma_3\rtimes K_{\infty}(\sigma_3)$
\item $\Sigma_3\rtimes L_{\gamma}(\sigma_3)$, $\gamma\in\R$
\item $\Sigma_1^\perp\rtimes H_\alpha(\sigma_1)$, $\alpha\in[0,+\infty]$
\item $\Sigma_2^\perp\rtimes H_\alpha(\sigma_2)$, $\alpha\in[0,+\infty]$
\item $\Sigma_3^\perp\rtimes \, ^tH_0(\sigma_3)$
\item $\Sigma_3^\perp\rtimes \,^tH_1(\sigma_3)$
\end{enumerate}
Four dimensional groups:
\begin{enumerate}[(4.1)]
\item $ \Sigma_3\rtimes H^0(\sigma_3)$
\item $\Sigma_1^\perp\rtimes H^0(\sigma_1)$
\item $\Sigma_2^\perp\rtimes H^0(\sigma_2)$
\item $\Sigma_3^\perp\rtimes\,^tL_{\gamma}(\sigma_3)$, $\gamma\in [-1,0]$
\end{enumerate}
Five dimensional groups:
\begin{enumerate}[(5.1)]
\item $\Sigma_3^\perp\rtimes \,^tH^0(\sigma_3)$.
\end{enumerate}
\end{theorem}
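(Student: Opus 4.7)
The plan is to carry out the classification in two stages: first modulo the algebraic equivalence of $MA$-conjugation (which, by Proposition~\ref{conjugation}, respects both the vector component $\Sigma$ and the homogeneous component $H$), and then determine which of the resulting $MA$-classes are merged under the full $\Sptwor$-conjugation. The first stage itself splits naturally into a classification of admissible vector subspaces $\Sigma\subseteq\Symtwor$, followed, for each such $\Sigma$, by a classification of connected Lie subgroups $H\subseteq\Gltwor$ that act on $\Sigma$ via the $\dagger$-action.

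For the vector components, the $\Gltwor$-action $\sigma\mapsto\,^th^{-1}\sigma h^{-1}$ is matrix congruence, and by Sylvester's law of inertia its orbits on lines through the origin in $\Symtwor$ are parametrized by the signature of a generator. This gives exactly three canonical one-dimensional subspaces $\Sigma_1,\Sigma_2,\Sigma_3$, whose generators have signatures $(2,0)$, $(1,1)$ and $(1,0)$ respectively. For two-dimensional subspaces one dualizes via the trace pairing on $\Symtwor$: the annihilator $\Sigma^\perp$ of a two-dimensional subspace is a line, yielding three further classes $\Sigma_1^\perp,\Sigma_2^\perp,\Sigma_3^\perp$ covering all two-dimensional cases.

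Next, for each canonical $\Sigma$ one computes its stabilizer in $\Gltwor$ (the subgroup preserving $\Sigma$ setwise, not pointwise), which plays the role of the local algebraic normalizer and contains every admissible $H$. In the $\Sigma_1$ and $\Sigma_2$ cases these stabilizers are products of a compact group and an abelian dilation, while in the $\Sigma_3$ case one obtains a solvable group of upper triangular type. The task is then to classify connected Lie subgroups of each stabilizer modulo the stabilizer's own adjoint action, a problem on subalgebras of Lie algebras of dimension at most three. This produces the one-parameter families $H_\alpha(\sigma_i)$, the top-dimensional normalizers $H^0(\sigma_i)$ and the distinguished subgroups $K_0,K_\infty,L_\gamma,H_{\alpha,0}$ appearing in the statement, together with their transposed variants $\,^tH_0,\,^tH_1,\,^tL_\gamma,\,^tH^0$ relevant in the $\Sigma^\perp$ cases.

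The final step is to promote $MA$-equivalence to $\Sptwor$-equivalence, which has been prepared in Section~\ref{full}. Using the Bruhat decomposition of $\Sptwor$ relative to the maximal parabolic $Q$, the only new identifications come from the non-trivial Weyl element: it conjugates $N$ into $\,^tN$ and, in $\cE_2$ terms, relates groups with vector component of one dimension to groups with vector component of the complementary dimension. The main obstacle is precisely the book-keeping of this last step: one must verify that each continuous parameter ($\alpha$ in the $H_\alpha$ families, $\gamma$ in $L_\gamma$, $\alpha$ in $H_{\alpha,0}$) is actually a full symplectic invariant, that no spurious coincidences arise among the $\Sigma_i^\perp\rtimes H'$ entries coming from different $\Sigma_j\rtimes H$ classes, and that the Weyl conjugation never carries a group of $\cE_2$ outside $\cE_2$ without a canonical representative inside. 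Once this accounting is settled, the surviving representatives are sorted by total dimension $\dim\Sigma+\dim H$ into the 2-, 3-, 4- and 5-dimensional lists displayed in the theorem.
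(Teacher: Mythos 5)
Your first stage (classification modulo $MA$) matches the paper: Sylvester canonical forms give the three lines $\Sigma_1,\Sigma_2,\Sigma_3$, the two-dimensional cases are handled by the trace-pairing duality of Proposition~\ref{perp} together with transposition of $H$, and for each canonical $\Sigma$ one classifies the subalgebras of $\gh(\Sigma)$ up to conjugation by the stabilizer $H(\Sigma)$. That part of the plan is sound and is exactly what Propositions~\ref{conj1}--\ref{list3} carry out.

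The genuine gap is in the second stage, and it is not merely deferred book-keeping: your description of the mechanism is wrong. The Bruhat decomposition relative to the Siegel parabolic has \emph{two} nontrivial cells, represented by the long element $-J$ (which conjugates $N$ to $\,^tN$) and by the intermediate element $w_0$. The paper's Lemma~\ref{bruhat} shows that the $-J$ cell, the one you invoke, produces \emph{no} identifications at all: conjugating any $G\in\cE_2$ by an element of $Q(-J)Q$ throws it outside $Q$ unless $\Sigma=0$. All the extra identifications come from $w_0$, and they do not simply ``relate groups with vector component of one dimension to groups with vector component of the complementary dimension'': for instance $\Sigma_3\rtimes H_\infty(\sigma_3)\sim\Sigma_3\rtimes H_{-\frac12,0}(\sigma_3)$ and $\Sigma_3\rtimes H_{\gamma,0}(\sigma_3)\sim\Sigma_3\rtimes H_{-\gamma/(2\gamma+1),0}(\sigma_3)$ preserve $\dim\Sigma=1$, and the latter shows that $\gamma$ is \emph{not} a full symplectic invariant --- this is precisely why the ranges $[-1,0]$ appear in items (2.5) and (4.4), contrary to your expectation that one should ``verify that each continuous parameter is a full symplectic invariant.'' Moreover, the $w_0$-conjugation of a semidirect product $\Sigma\rtimes H$ generally lands in a triple $(\Sigma',H',\tau')$ with a nonzero cocycle $\tau'$, so one needs the whole apparatus of Lemmas~\ref{bruhat2}--\ref{bruhat4} (the necessary conditions $h^\dagger[\Sigma]\subseteq\sigma_4^\perp$, $hHh^{-1}\subseteq T$, the function $\Psi$ and conditions \eqref{crazytau}) to decide when the image is again, up to $MA$, a group in $\cE_2$ and to identify which one. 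Without this analysis your plan would miss identifications such as $\Sigma_3\rtimes H^0(\sigma_3)\sim\Sigma_3^\perp\rtimes K_0(\sigma_3)$ and would therefore output a redundant list.
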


The proof of the above theorem is the content of the remaining part of
the paper.   We start with  some preliminary results, which are stated for
any size $d$.  

\section{Preliminary results}
First of all, we recall some basic algebraic properties
  of the symplectic group. For these and other standard Lie theoretic constructs and notions see for example 
  {\cite{knapp2002}}. The Lie algebra of $\Spdr$ is 
$$
\spnr=\left\{g\in\gltwonr:\;^t\!gJ+Jg=0\right\},
$$
and its elements are  of the form
\begin{equation*}
X=\begin{bmatrix}A&B\\C&-\t\,A\end{bmatrix},
\label{hamilton}
\end{equation*}
where $A$ is an arbitrary $d\times d$ matrix and $B,C\in\Symdr$. The
standard  maximal   parabolic subgroup $Q$ of the symplectic group
that we are interested in is the Lie group  
\begin{equation}
Q=\left\{\begin{bmatrix} h&0\\ \sigma h&h^\sharp\end{bmatrix}: h\in GL(d,\R), \;\sigma\in\Symdr  \right\},
\label{maxPAR}
\end{equation}
whose Lie algebra is:
\begin{equation*}
\gq=\left\{\begin{bmatrix} A&0\\ \sigma&-\t\,A\end{bmatrix}: A\in\gldr, \;\sigma\in\Symdr  \right\}.
\label{maxpar}
\end{equation*}
The Langlands decomposition $Q=MAN$ is easily checked to be
\begin{align*}
M
&=\left\{\begin{bmatrix} h&0\\0&\t\,h^{-1}\end{bmatrix}: \det h=\pm1  \right\}\\
A
&=\left\{\begin{bmatrix} \lambda I&0\\ 0&\lambda^{-1} I\end{bmatrix}: \lambda>0  \right\}
\\
N
&=\left\{\begin{bmatrix} I&0\\ \sigma&I\end{bmatrix}: \sigma\in\Symdr  \right\}.
\end{align*}
We  call  $MA\simeq GL(d,\R)$   the  homogeneous component and   $N\simeq\Symdr$  the vector component. As is well-known, $MA$ normalizes $N$, so that $Q$ is the semidirect product of $MA$ and the abelian normal factor $N$, namely
\begin{equation*}
Q=\Symdr\rtimes GL(d,\R),
\qquad
\gq=\Symdr\rtimes\gldr.
\label{maximals}
\end{equation*}
To see this explicitly, notice that each element of $Q$ is the product
\begin{equation*}
g(\sigma,h)= \begin{bmatrix} I&0\\  \sigma&I\end{bmatrix}
\begin{bmatrix}h&0\\ 0&h^\sharp\end{bmatrix}
=\begin{bmatrix} h&0\\ \sigma h&h^\sharp\end{bmatrix},
\end{equation*}
where $\sigma\in\Symdr$ and $h\in\Gldr$  and each such product is automatically symplectic.
The above factorization is formally
\begin{equation*}
g(\sigma,h)=g(\sigma,I)g(0,h).
\label{factor}
\end{equation*}
Now, the product of two matrices in $Q$ is
$$
g(\sigma,h)g(\sigma',h')
=\begin{bmatrix} hh'& 0\\ (\sigma +h^\dagger[\sigma'])hh'&(hh')^{\sharp}\end{bmatrix}
=g(\sigma +h^\dagger[\sigma'],hh'),
$$
where
\begin{equation}
h^\dagger[\sigma]=\t\,h^{-1}\sigma h^{-1}.
\label{cogr} 
\end{equation}
Thus, the group law is given by
\begin{equation}
g(\sigma,h)g(\sigma',h')=g(\sigma +h^\dagger[\sigma'],hh'),
\label{semidirect}
\end{equation}
 the identity is $g(0,I)$ and inverses are given by
\begin{equation*}
g(\sigma,h)^{-1}=g(-\,\!^th\sigma h,h^{-1})=g(-(h^{-1})^\dagger[\sigma],h^{-1}).
\label{inv}
\end{equation*}
Notice that
$$
\,^\dagger:\Gldr\times\Symdr\to\Symdr,
\qquad
\,^\dagger(h,\sigma)=h^\dagger[\sigma]
$$
is actually a group action and $\sigma\mapsto h^\dagger[\sigma]$ is a group automorphism of $N$.

Clearly, a group $G=\Sigma\rtimes H$ in the class $\cE$ is a connected Lie
  subgroup of $Q$ and the fact that $H$ normalizes $\Sigma$ is
  equivalent to the condition that $H$ leaves $\Sigma$ invariant under
  the action \eqref{cogr}. Conversely,  if $\Sigma$ is a non-zero subspace of
  $\Symdr$ and $H$ is a nontrivial Lie subgroup of $\Gldr$ such that
  $h^\dag[\Sigma]=\Sigma$ for all $h\in H$, then 
\[
G=\{ g(\sigma,h)\,:\,\sigma\in\Sigma,h\in H\}
\]
is a a connected Lie subgroup of $Q$ and is in the class
$\cE$. However, there exist connected Lie subgroups of $Q$, which are
not in the class $\cE$ (see some examples in
Appendix~\ref{sec:parabolic}).

We end this section with a few observations that  reduce our
classification problem.  As already noticed, by Theorem~1 of
\cite{DeDe11}, if $\Sigma\rtimes H\in\cE$ is a reproducing
group, then $1\leq\dim{\Sigma}\leq d$.  Furthermore, 
given a space $\Sigma$, there
exists a maximal closed (hence Lie) subgroup of $GL(d,\R)$ leaving
invariant $\Sigma$, namely
 \begin{equation}
H(\Sigma)=\left\{h\in GL(d,\R):h^\dagger[\sigma]\in\Sigma,\text{ for all }\sigma\in\Sigma\right\}.
\label{largest}
\end{equation}
Moreover,  there exists a ``duality relation'' induced by the orthogonality within $\Symdr$ relative to the usual inner product
$\langle\sigma,\tau\rangle=\trace(\sigma\tau)$. 
For any subset $\Sigma$ of ${\rm Sym}(d)$ we write
$$
\Sigma^\perp=\{\tau\in{\rm Sym}(d):\langle\sigma,\tau\rangle=0\text{ for all }\sigma\in\Sigma\}.
$$
As seen below in Proposition~\ref{perp}, the natural companion notion for the homogeneous factor $H$ is transposition. Hence
for any subgroup $H$ of $\Gldr$, we write
$$
\,^t\!H=\{\!\,^th:h\in H\}.
$$
\begin{prop}\label{perp} The following are equivalent:
\begin{itemize}
\item[(i)] $\Sigma\rtimes H\in\cE$;
\item[(ii)]  $\Sigma^\perp\rtimes\!\,^t\!H\in\cE$.
\end{itemize}
\end{prop}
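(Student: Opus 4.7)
My plan is to reduce this equivalence to a single bilinear identity for the $\dagger$-action and then read both implications off it. The definition of $\cE$ requires four things of $\Sigma\rtimes H$: $\Sigma$ is a (nonzero) subspace of $\Symdr$, $H$ is a (nontrivial) connected Lie subgroup of $\Gldr$, and $H$ stabilizes $\Sigma$ under $h^\dagger[\sigma]={}^th^{-1}\sigma h^{-1}$. All three structural conditions on $(\Sigma,H)$ have obvious analogues on $(\Sigma^\perp,{}^tH)$: orthogonal complement of a subspace is a subspace; transposition is a diffeomorphism of $\Gldr$ satisfying ${}^t({}^th)={}^th$ and ${}^t(hk)={}^tk\,{}^th$, so ${}^tH$ is a connected Lie subgroup; and in the regime of interest $1\le\dim\Sigma\le d$ (recalled just before the statement) the orthogonal $\Sigma^\perp$ has positive dimension as well.

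The core content, therefore, is showing that $H$ stabilizes $\Sigma$ under $\,^\dagger$ if and only if ${}^tH$ stabilizes $\Sigma^\perp$ under $\,^\dagger$. The key computation is that for any $h\in\Gldr$ and $\sigma,\tau\in\Symdr$,
\begin{equation*}
\langle h^{\dagger}[\sigma],\tau\rangle
=\trace({}^th^{-1}\sigma h^{-1}\tau)
=\trace(\sigma h^{-1}\tau\,{}^th^{-1})
=\langle\sigma,({}^th)^{\dagger}[\tau]\rangle,
\end{equation*}
using cyclicity of the trace and the identity $({}^th)^{\dagger}[\tau]=h^{-1}\tau\,{}^th^{-1}$ (observe that transposition turns the left factor ${}^t(\cdot)^{-1}$ of $^\dagger$ into $(\cdot)^{-1}$ and the right factor into ${}^t(\cdot)^{-1}$).

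From this pairing the two implications are immediate. If $\Sigma\rtimes H\in\cE$ and $\tau\in\Sigma^\perp$, then for every $h\in H$ and every $\sigma\in\Sigma$ we have $h^{\dagger}[\sigma]\in\Sigma$, so
\begin{equation*}
\langle\sigma,({}^th)^{\dagger}[\tau]\rangle=\langle h^{\dagger}[\sigma],\tau\rangle=0;
\end{equation*}
hence $({}^th)^{\dagger}[\tau]\in\Sigma^\perp$, proving that ${}^tH$ stabilizes $\Sigma^\perp$. For the converse I would apply the same argument to the pair $(\Sigma^\perp,{}^tH)$, using the involutive properties $(\Sigma^\perp)^\perp=\Sigma$ (nondegeneracy of $\langle\cdot,\cdot\rangle$ on $\Symdr$) and ${}^t({}^tH)=H$.

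I do not anticipate a real obstacle: the statement is essentially an instance of the general principle that a representation stabilizes a subspace iff the contragredient representation stabilizes its annihilator, with the extra observation that the contragredient of $h\mapsto h^{\dagger}$ on $\Symdr$ is $h\mapsto({}^th)^{\dagger}$. The only minor bookkeeping is verifying that ${}^tH$ inherits from $H$ the properties required to belong to $\cE$, which follows from the fact that transposition is a smooth involution of $\Gldr$ mapping connected Lie subgroups to connected Lie subgroups.
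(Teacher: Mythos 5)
Your proposal is correct and follows essentially the same route as the paper: the entire content is the adjoint identity $\langle h^{\dagger}[\sigma],\tau\rangle=\langle\sigma,({}^th)^{\dagger}[\tau]\rangle$ obtained from cyclicity of the trace, from which invariance of $\Sigma$ under $H$ is equivalent to invariance of $\Sigma^\perp$ under ${}^tH$ (the paper states only this computation, leaving the routine structural checks you spell out implicit). The only blemish is the typo ``${}^t({}^th)={}^th$'', which should read ${}^t({}^th)=h$, as you in fact use later.
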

\begin{proof}
If $\sigma\in\Sigma$, $\tau\in\Sigma^\perp$ and $h\in H$, then
$$
\langle (\!\,^th)^\dagger[\tau],\sigma\rangle
=\trace(h^{-1}\tau\!\,^th^{-1}\sigma)
=\trace(\tau\!\,^th^{-1}\sigma h^{-1})
=\langle \tau,h^\dagger[\sigma]\rangle.
$$
Therefore $h^\dagger[\Sigma]=\Sigma$ if and only if $ (\!\,^th)^\dagger[\Sigma^\perp]=\Sigma^\perp$.
\end{proof}

The next proposition shows that conjugation via $g(0,h)\in MA$ maps
$\cE$ into itself and, more precisely, that it preserves both  the
homogeneous and the normal factors. It also records that it  preserves
the subclass of reproducing groups.

\begin{prop}\label{conjugation} Take $\Sigma\rtimes H\in\cE$ and $h\in\Gldr$. Then $i_{g(0,h)}(\Sigma\rtimes H)\in\cE$. More precisely, if $\Sigma'\rtimes H'\in\cE$, then the following are equivalent:
\begin{itemize}
\item[(i)] $i_{g(0,h)}(\Sigma\rtimes H)=\Sigma'\rtimes H'$
\vskip0.1truecm
\item[(ii)]  $h^{\dagger}[\Sigma]=\Sigma'$ and  $i_h(H)=H'$.
\vskip0.2truecm
\item[(iii)] $i_{g(0,h^\sharp)}(\Sigma^\perp\rtimes\!\,^t\!H)=\Sigma'^\perp\rtimes\!\,^t\!H'$
\vskip0.1truecm
\item[(iv)]   $(h^\sharp)^\dagger[\Sigma^\perp]=(\Sigma')^\perp$  and  $i_{h^\sharp}(\,\!^t\!H)=\,\!^t(H')$.
\end{itemize}
In this case, conjugation by $g(0,h)$ establishes  a one-to-one correspondence between 
subgroups of $\Sigma\rtimes H$  in $\cE$and subgroups of $\Sigma'\rtimes H'$ in  $\cE$.
\end{prop}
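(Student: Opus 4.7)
The plan is to reduce everything to one explicit conjugation computation, and then transfer the result to the $\perp$/transpose formulation by symmetry.

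First I would carry out the core computation. Using the group law \eqref{semidirect} and the inverse formula \eqref{inv}, for every $h,h_0\in\Gldr$ and $\sigma\in\Symdr$,
$$g(0,h)\,g(\sigma,h_0)\,g(0,h)^{-1}=g(h^\dagger[\sigma],\,hh_0h^{-1}).$$
Thus $i_{g(0,h)}(\Sigma\rtimes H)=h^\dagger[\Sigma]\rtimes i_h(H)$; since $h^\dagger$ is a linear isomorphism of $\Symdr$ and $i_h$ is a Lie group automorphism of $\Gldr$, the image is again in $\cE$ (connectedness and invariance are inherited). This simultaneously proves the opening assertion of the proposition and the equivalence $(i)\Leftrightarrow(ii)$.

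For $(iii)\Leftrightarrow(iv)$ I would invoke Proposition~\ref{perp} to ensure that both $\Sigma^\perp\rtimes\,^t\!H$ and $(\Sigma')^\perp\rtimes\,^t(H')$ are genuinely members of $\cE$, and then reapply the core computation with $h$ replaced by $h^\sharp=\,^th^{-1}$. To identify the two sides of the proposition, I would then prove $(ii)\Leftrightarrow(iv)$ via two compatibility identities. For the homogeneous factor, the one-line calculation
$$i_{h^\sharp}(\,^th_0)=\,^th^{-1}\,\,^th_0\,\,^th=\,^t(hh_0h^{-1})=\,^t(i_h(h_0))$$
shows that $i_h(H)=H'$ iff $i_{h^\sharp}(\,^t\!H)=\,^t(H')$. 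For the vector factor, a trace-cyclicity argument in the spirit of the proof of Proposition~\ref{perp} gives
$$\langle(h^\sharp)^\dagger[\tau],\,h^\dagger[\sigma]\rangle=\trace(\tau\sigma)=\langle\tau,\sigma\rangle$$
for all $\sigma,\tau\in\Symdr$; hence $(h^\sharp)^\dagger[\Sigma^\perp]\subseteq(h^\dagger[\Sigma])^\perp$, and since $(h^\sharp)^\dagger$ is invertible on $\Symdr$ a dimension count promotes inclusion to equality.

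Finally, the two one-to-one correspondences follow by restriction. Any $\cE$-subgroup of $\Sigma\rtimes H$ has the form $\Sigma_0\rtimes H_0$ with $\Sigma_0\subseteq\Sigma$ a subspace invariant under $H_0\subseteq H$, and the core computation sends it to the $\cE$-subgroup $h^\dagger[\Sigma_0]\rtimes i_h(H_0)$ of $\Sigma'\rtimes H'$, with inverse $i_{g(0,h^{-1})}$. The reproducing property is preserved because $\mu(g(0,h))$ is a unitary operator intertwining the restrictions of the metaplectic representation; admissible vectors for a group and for its conjugate correspond via $\eta\mapsto\mu(g(0,h))\eta$. The only mildly delicate point I anticipate is the bookkeeping in the third step: one must keep straight the interplay of $\,^t(\cdot)$, $\cdot^\sharp$, and $\cdot^\dagger$. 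Systematic use of $h^\sharp=\,^th^{-1}$ reduces each passage to a single line of matrix algebra.
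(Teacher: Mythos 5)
Your proposal is correct and follows essentially the same route as the paper: the paper's proof simply states that the equivalences are ``a matter of writing down the various operations,'' records that $i_{g(0,h)}$ sends a subgroup $\Sigma_0\rtimes H_0$ to $(h^\dagger[\Sigma_0])\rtimes(i_h(H_0))$, and defers the preservation of the reproducing property to part~II. You have merely written out those computations explicitly (the conjugation formula, the identities $i_{h^\sharp}({}^t h_0)={}^t(i_h(h_0))$ and $\langle (h^\sharp)^\dagger[\tau],h^\dagger[\sigma]\rangle=\langle\tau,\sigma\rangle$), all of which check out.
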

\begin{proof}
The equivalence of (i), (ii), (iii) and (iv) is a matter of writing down the various operations. Clearly, if $\Sigma_0\rtimes H_0$ is a subgroup of $\Sigma\rtimes H$, then $i_{g(0,h)}$ maps it into the subgroup
$(h^\dagger[\Sigma_0])\rtimes(i_h(H_0))$ of $\Sigma'\rtimes H'$, and conversely. 
\end{proof}

\section{Classification of $\cE$ when $d=2$}
As explained in the introduction, our classification problem is
motived by search of all possible signal analyses 
associated with the metaplectic representation of $\Spdr$. In this
framework two groups  $G=\Sigma\rtimes H$ and $G'=\Sigma'\rtimes H'$ 
in the class $\cE$ are  regarded as equivalent if there exists
$w\in\Spdr$ such that $G'=wGw^{-1}$, since they give rise to the same
signal analysis.  However, there are two different cases.  If  $w=g(0,h)\in MA$,
Proposition~\ref{conjugation} shows that the conjugation leaves $\cE$
invariant, so that any element in $MA$ is, in this sense,  ``admissible''. Moreover,
item~(ii) of the same proposition shows that the conjugation
transforms $\Sigma$ and $H$ separately.   On the contrary, 
if $w$ is an element
in $\Spdr$, which is not in $MA$,  then $wGw^{-1}$ is not necessarily in the class
$\cE$.  Lemma~\ref{bruhat} describes the admissible elements $w$,
typically Weyl group elements,  whose conjugation, however, may fail to
preserve the semi-direct product. It can indeed happen that $w g(\sigma,I)w^{-1}\not \in N$ 
for some $g(\sigma,I)\in G$ or that  $wg(0,h)w^{-1}\not \in MA$ for some
$g(0,h)\in G$. However, since $wGw^{-1}$ is a Lie subgroup of $Q$, by Proposition~\ref{triplette}
$wGw^{-1}$  defines a triple $(\Sigma,H,\tau)$ where
$\Sigma$ and $H$ are given by  \eqref{tripledef} and $\tau$ is the
map of the form given by Remark~\ref{tauzero}.  In Section~\ref{full}
this characterization will allow us to check when two groups in different classes
modulo $MA$ are equivalent modulo $w$.

The proof of Theorem~\ref{main} is achieved in two main steps.  We
first classify the groups in the class $\cE$ modulo a conjugation in
$MA$ by the following strategy.

\begin{enumerate}[a)]
\item Since $d=2$, the dimension of $\Sigma$ is either $1$ or $2$.
\item We start from the case  $n=1$ and therefore write $\Sigma=\sp\{\sigma\}$.
By Proposition~\ref{conjugation}, we assume that $\sigma$ is in
Sylvester canonical form (there are only three meaningful
possibilities) and compute in each case $H(\Sigma)$ and its Lie
algebra $\gh(\Sigma)$.  
\item We classify all the Lie subalgebras of  $\gh(\Sigma)$  up to
  conjugation by $H(\Sigma)$ and compute the corresponding connected
  Lie subgroups, thereby obtaining all the subgroups in $\cE$ with
  $n=1$. Note that, since the abelian normal factor
    $\Sigma$ is fixed, the only $MA$-conjugations we can consider are
    those that  leave invariant $\Sigma$, {\em i.e.}, 
    the elements of $H(\Sigma)$.
\item We use Proposition~\ref{perp} and describe all the subgroups in $\cE$ with $n=2$ as those that are dual to some $G$ as before, with $n=1$. Indeed, $\dim{\rm Sym}(2,\R)=3$ and hence
$\dim(\sp\{\sigma\}^\perp)=2$. This completes the picture.
\end{enumerate}
The  lists of groups are given
in Propositions~\ref{list1}, Propositions~\ref{list2} and
Propositions~\ref{list3} according to the three possible choices of
$\Sigma$.

In the second step  we identify the groups in the above three lists
that are conjugated by an element $g$ of $Sp(2,\R)$, which is not in $MA$.
Lemma~\ref{bruhat} characterizes all admissible $g$, see
\eqref{br}, and the possible $\Sigma$ (see Remark~\ref{nonMA}).
Hence, we procede by a case by case analysis, which is  based on
Proposition~\ref{triplette} in the appendix.

\subsection{Reduction to canonical form}\label{sec:CF} Whenever
$\sigma\in\Symdr$,  we write
$$
H(\sigma)=\bigl\{h\in\Gldr:h^\dagger[\sigma]=\lambda\sigma\text{ for some }\lambda\in\R^*\bigr\}
$$
instead of $H(\sp\{\sigma\})$, and also
$$
F(\sigma)=\bigl\{h\in\Gldr:h^\dagger[\sigma]=\pm\sigma\bigr\}.
$$
Both $H(\sigma)$ and $F(\sigma)$ are subgroups of $\Gldr$. We make a first observation.
\begin{prop}\label{isoR}
Assume that $\sigma\neq 0$.
 The map $\varphi:\R_+\times F(\sigma)\to H(\sigma)$
defined by $\varphi(e^t,h)=he^{-t/2}$ is a group isomorphism.
\end{prop}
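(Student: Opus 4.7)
The plan is to verify the four standard pieces of the claim: $\varphi$ is well-defined (lands in $H(\sigma)$), it is a group homomorphism, it is injective, and it is surjective. The key calculation underlying everything is that for a scalar matrix $kI_2$ with $k>0$ one has $(kI_2)^\dagger[\sigma]=k^{-2}\sigma$, so scalar matrices realise precisely all strictly positive scalings of $\sigma$; the sign $\pm$ then comes from $F(\sigma)$.

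First I would check well-definedness. Given $(e^t,h)\in\R_+\times F(\sigma)$, using \eqref{cogr} and the fact that $e^{-t/2}I_2$ is scalar, I compute
\[
\varphi(e^t,h)^\dagger[\sigma]=(he^{-t/2})^\dagger[\sigma]=e^{t}\,h^\dagger[\sigma]=\pm e^t\sigma\in\R^*\sigma,
\]
so $\varphi(e^t,h)\in H(\sigma)$. Next, since scalar matrices commute with every element of $\Gldr$,
\[
\varphi(e^t,h)\varphi(e^s,h')=he^{-t/2}h'e^{-s/2}=hh'\,e^{-(t+s)/2}=\varphi(e^{t+s},hh'),
\]
which shows $\varphi$ is a homomorphism from the direct product $\R_+\times F(\sigma)$ to $H(\sigma)$.

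For injectivity, suppose $\varphi(e^t,h)=I_2$, i.e.\ $h=e^{t/2}I_2$. Applying the $F(\sigma)$-condition gives $(e^{t/2}I_2)^\dagger[\sigma]=e^{-t}\sigma=\pm\sigma$, and since $e^{-t}>0$ we must have $e^{-t}=1$, hence $t=0$ and $h=I_2$. For surjectivity, let $g\in H(\sigma)$ with $g^\dagger[\sigma]=\lambda\sigma$, $\lambda\in\R^*$. Set $e^t:=|\lambda|$ and $h:=g\,e^{t/2}$. Then
\[
h^\dagger[\sigma]=e^{-t}g^\dagger[\sigma]=\frac{\lambda}{|\lambda|}\,\sigma=\pm\sigma,
\]
so $h\in F(\sigma)$, and by construction $\varphi(e^t,h)=g$. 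Uniqueness of the decomposition is already built into injectivity.

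There is no real obstacle here; the only subtlety worth flagging is that $d=2$ is not essential to the argument itself (the identity $(kI_d)^\dagger[\sigma]=k^{-2}\sigma$ holds in every dimension), but the statement is made at $d=2$ because that is the setting of the classification that follows.
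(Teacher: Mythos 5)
Your proof is correct and follows essentially the same route as the paper's: the same computation $(he^{-t/2})^\dagger[\sigma]=e^t h^\dagger[\sigma]$ for well-definedness, the same injectivity argument via $h=e^{t/2}I_2\in F(\sigma)$, and the same surjectivity argument writing $\lambda=\pm e^s$ and rescaling. The only differences are cosmetic: you spell out the homomorphism property (which the paper dismisses as clear) and you correctly note that the argument is dimension-independent.
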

\begin{proof} First of all, if $(e^t,h)\in\R_+\times F(\sigma)$, then
$$
(he^{-t/2})^\dagger[\sigma]=e^th^\dagger[\sigma]=\pm e^t\sigma
$$
and hence $he^{-t/2}\in H(\sigma)$. Clearly, $\varphi$ is a group homomorphism.  If $he^{-t/2}=I$, then $e^{t/2}I=h\in F(\sigma)$ and it follows that
$e^{t}\sigma=\pm\sigma$. Therefore $t=0$ and $h=I$. Hence $\varphi$ is injective. Finally, take $h\in H(\sigma)$.
Then $h^\dagger[\sigma]=\lambda\sigma$ for some $\lambda\in\R^*$. Upon writing $\lambda={\rm sign}(\lambda)|\lambda|=:\eps e^s$, with $\eps=\pm1$, we get
$$
(e^{s/2}h)^\dagger[\sigma]=e^{-s}h^\dagger[\sigma]=\eps\sigma,
$$
so that $e^{s/2}h\in F(\sigma)$. But then $h=\varphi(e^{s},e^{s/2}h)$, whence surjectivity.
\end{proof}
By Sylvester's law of inertia, there exists $g\in\Gldr$ such that $g^\dagger[I_{pqr}]=\sigma$, where $p+q+r=d$
and $I_{pqr}$ is  the canonical   metric with signature $(p,q,r)$, namely
\[
I_{pqr}=\begin{bmatrix}
   I_p & 0 & 0 \\
   0 & -I_q & 0 \\
   0 & 0 & 0 \end{bmatrix}.
\]
We decompose $F(I_{pqr})=O(p,q,r)\cup O^*(p,q,r)$, where
\begin{align*}
O(p,q,r)&=\{g\in\Gldr:\,^tgI_{pqr}g=I_{pqr}\}     \\
O^*(p,q,r)&=\{g\in\Gldr:\,^tgI_{pqr}g=-I_{pqr}\}
\end{align*}
and observe that $O^*(p,q,r)$ is empty whenever $p\not=q$ because $\,^tgI_{pqr}g$ has signature $(p,q,r)$, whereas $-I_{pqr}$ has signature $(q,p,r)$. The former is a  group, the latter is not, and the product of two elements of $O^*(p,q,r)$ is in $O(p,q,r)$.
\begin{cor}\label{canonicalR}  $H(I_{pqr})=\{e^sh:s\in\R,h\in O(p,q,r)\cup O^*(p,q,r)\}$.
\end{cor}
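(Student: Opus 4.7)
The corollary is essentially an unpacking of Proposition~\ref{isoR} applied to the particular symmetric matrix $\sigma = I_{pqr}$, together with the given description of $F(I_{pqr})$. So my plan is to simply invoke these two facts and rewrite the parameterization.

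First I would apply Proposition~\ref{isoR} with $\sigma=I_{pqr}$: it says that $\varphi\colon\R_+\times F(I_{pqr})\to H(I_{pqr})$, $\varphi(e^t,h)=he^{-t/2}$, is a (group) isomorphism, hence in particular a bijection. Therefore
\[
H(I_{pqr})=\bigl\{h\,e^{-t/2}:t\in\R,\;h\in F(I_{pqr})\bigr\}.
\]
Since the scalar $e^{-t/2}$ commutes with $h$, and since $t\mapsto s:=-t/2$ is a bijection of $\R$ onto itself, this set coincides with $\{e^s h:s\in\R,\;h\in F(I_{pqr})\}$. Substituting the decomposition $F(I_{pqr})=O(p,q,r)\cup O^*(p,q,r)$ recorded just before the statement yields
\[
H(I_{pqr})=\bigl\{e^s h:s\in\R,\;h\in O(p,q,r)\cup O^*(p,q,r)\bigr\},
\]
which is exactly the claim.

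There is really no obstacle here: the only minor point worth a sentence is to verify that $O(p,q,r)$ and $O^*(p,q,r)$ as defined via ${}^tgI_{pqr}g=\pm I_{pqr}$ do coincide with the two pieces of $F(I_{pqr})$ defined via $g^\dagger[I_{pqr}]=\pm I_{pqr}$. This is immediate from the definition of $\dagger$: the equation ${}^tg^{-1}I_{pqr}g^{-1}=\pm I_{pqr}$ is equivalent, upon multiplication by ${}^tg$ on the left and $g$ on the right, to ${}^tg I_{pqr} g=\pm I_{pqr}$.
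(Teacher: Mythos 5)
Your proof is correct and follows exactly the route the paper takes: its own proof is the one-line remark that the corollary ``follows from Proposition~\ref{isoR} and the definitions of $O(p,q,r)$ and $O^*(p,q,r)$,'' which is precisely what you spell out. The extra check that $g^\dagger[I_{pqr}]=\pm I_{pqr}$ is equivalent to ${}^tgI_{pqr}g=\pm I_{pqr}$ is a worthwhile detail the paper leaves implicit, and you handle it correctly.
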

\begin{proof} Follows from Proposition~\ref{isoR} and the definitions of $O(p,q,r)$ and $O^*(p,q,r)$.
\end{proof}

Summarizing the above reasoning, up to a conjugation be an
  element $g(0,h)\in MA$, we can always assume that $\sigma=I_{pqr}$, and since
  $\sp\{\sigma\}=\sp\{-\sigma\}$, we may choose $p\geq q$. In the case
  $d=2$, there are exactly three interesting possibilities for
  $(p,q,r)$, namely $(2,0,0)$, $(1,1,0)$ and $(1,0,1)$, because the
  case $(0,0,2)$ yields $\sigma=0$. Correspondingly, we define $\sigma_1$, $\sigma_2$ and $\sigma_3$ as in \eqref{5sigma}.

\subsection{Classification} As outlined earlier, we carry out the classification starting from the canonical forms.  In what follows we often write $\eps$ for a number in $\{\pm1\}$.

\subsubsection{Signature $(2,0,0)$} 
We recall that the full orthogonal  group $O(2)=O(2,0,0)$ decomposes as
$$
O(2)=SO(2)\cup\Lambda\cdot SO(2),
$$
where $\Lambda$ is  $\sigma_2$, regarded  as a  ``rotation with
negative determinant'', rather than a canonical representative in
$Sp(2,\R)$. 

Corollary~\ref{canonicalR} gives
$$
H(\sigma_1)=\R_+\times O(2).
$$
The  Lie algebra of $H(\sigma_1)$ is
$$
\gh(\sigma_1)=\sotwo\oplus\R=\{\alpha J+ \beta I:\alpha,\beta\in\R\}.
$$
Both $H(\sigma_1)$ and $\gh(\sigma_1)$ are abelian direct sums. The nontrivial Lie subalgebras of $\gh(\sigma_1)$ are its one-dimensional subspaces. We put
$$
\gh_\infty(\sigma_1)=\sp\{J\}
$$
and, for $\alpha\in\R$,
$$
\gh_\alpha(\sigma_1)=\sp\{I+\alpha J\}.
$$
\begin{prop}\label{prop:conj1} Take $\alpha_1,\alpha_2\in\R\cup\{\infty\}$. Then  $\gh_{\alpha_1}(\sigma_1)$ is conjugate to $\gh_{\alpha_2}(\sigma_1)$ by an element of $H(\sigma_1)$ if and only if $\alpha_1=\pm\alpha_2$.
\end{prop}
\begin{proof} Take $g\in H(\sigma_1)$.
Since scalars commute with everything, we  can assume that $g\in O(2)=SO(2)\cup\Lambda\cdot SO(2)$.
Observe that  $R_\theta J R_{-\theta}=J$ and $\Lambda J \Lambda = - J$, so that $\sp\{J\}$ is fixed under conjugation by $g$. It follows that $\gh_\infty(\sigma_1)$ is not conjugate to any other algebra in the class. Finally,
\[
R_\theta (\alpha J+I)R_{-\theta}=(\alpha J+I),\qquad\Lambda \,R_{\theta}\, (\alpha J+I)\, R_{-\theta}\, \Lambda = \Lambda \,(\alpha J+I) \, \Lambda = -\alpha J+I
\]
imply the result.
\end{proof}

Next, we identify the connected Lie subgroups corresponding to the
various Lie algebras and then apply duality, in the sense of
Proposition~\ref{perp}.  Clearly,   the connected Lie subgroups of $\Spdr$,
whose Lie algebra is  $\gh(\sigma_1)$, $\gh_\infty(\sigma_1)$ and
$\gh_\alpha(\sigma_1)$ respectively, are
\begin{align*}
H^0(\sigma_1)&=SO(2)\times\R_+\\
H_\infty(\sigma_1)&=SO(2)     \\
H_\alpha(\sigma_1)&=\{e^tR_{\alpha t}:t\in\R\},\qquad\alpha\in[0,+\infty).
\end{align*}
Here  is the first list of groups in the class $\cE$.
\begin{prop}\label{list1}The following is a complete list, up to
$MA$-conjugation, of the  groups in $\cE$ whose normal factor is equal or orthogonal to $\Sigma_1=\sp\{\sigma_1\}$:
$$
\begin{array}{ll}
\text{ (1.i) } \Sigma_1\rtimes H^0(\sigma_1)\hskip3truecm
&
\text{ (1.iii) } \Sigma_1^\perp\rtimes H^0(\sigma_1)\\
\text{ (1.ii) } \Sigma_1\rtimes H_\alpha(\sigma_1),\text{  with }\alpha\in[0,+\infty]
&
\text{ (1.iv) }\Sigma_1^\perp\rtimes H_\alpha(\sigma_1),\text{  with }\alpha\in[0,+\infty].
\end{array}
$$
\end{prop}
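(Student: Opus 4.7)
The plan is to fix the normal factor first (either $\Sigma_1$ or $\Sigma_1^\perp$) and then classify the possible homogeneous factors $H$ up to $MA$-conjugation. The starting observation, provided by Proposition~\ref{conjugation}, is that two groups $\Sigma \rtimes H_1$ and $\Sigma \rtimes H_2$ are $MA$-conjugate precisely when there exists $h\in\Gldr$ with $h^\dagger[\Sigma]=\Sigma$ and $i_h(H_1)=H_2$; in other words, the $MA$-classification for groups with fixed normal factor $\Sigma$ reduces to the $H(\Sigma)$-conjugation classification of connected Lie subgroups $H\subseteq H(\Sigma)$ of positive dimension.

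For $\Sigma=\Sigma_1$, Corollary~\ref{canonicalR} gives $H(\sigma_1)=\R_+\times O(2)$ with identity component $H^0(\sigma_1)=\R_+\times SO(2)$. Any connected $H$ is therefore contained in $H^0(\sigma_1)$, and corresponds to a nonzero Lie subalgebra of $\gh(\sigma_1)=\R J\oplus\R I$. This two-dimensional abelian Lie algebra admits only one 2-dimensional subalgebra, namely itself (giving $H^0(\sigma_1)$, item (1.i)), together with the 1-parameter family $\gh_\infty(\sigma_1)$ and $\gh_\alpha(\sigma_1)$, $\alpha\in\R$. Proposition~\ref{conj1} then identifies the $H(\sigma_1)$-conjugacy classes: $\gh_\infty(\sigma_1)$ is fixed under all such conjugations, while $\gh_\alpha(\sigma_1)$ and $\gh_{-\alpha}(\sigma_1)$ are identified via $\Lambda$, so we may restrict $\alpha\in[0,+\infty)$. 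Writing the connected subgroups attached to each class yields precisely item (1.ii), with $\alpha\in[0,+\infty]$ absorbing the separate class $H_\infty(\sigma_1)$.

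For $\Sigma=\Sigma_1^\perp$, I plan to transfer Step~2 through Proposition~\ref{perp}: $\Sigma_1^\perp\rtimes H\in\cE_2$ if and only if $\Sigma_1\rtimes\,^tH\in\cE_2$, so up to the operation $H\mapsto\,^tH$ the admissible homogeneous factors are the same as before. The key point, to be verified directly, is that $H(\Sigma_1^\perp)=H(\sigma_1)=\R_+\times O(2)$ (equivalently, the identity in Proposition~\ref{perp} combined with $\,^tO(2)=O(2)$), and that transposition preserves each member of the list produced in Step~2 up to $H(\sigma_1)$-conjugation: $\,^tH^0(\sigma_1)=H^0(\sigma_1)$, $\,^tH_\infty(\sigma_1)=H_\infty(\sigma_1)$, and $\,^tH_\alpha(\sigma_1)=\{e^tR_{-\alpha t}:t\in\R\}$, which is conjugate to $H_\alpha(\sigma_1)$ via $\Lambda$ since $\Lambda R_\theta\Lambda=R_{-\theta}$. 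Hence the same list reappears, giving items (1.iii) and (1.iv). The main obstacle is this last step: one must be careful not to conflate the conjugation happening inside $H(\Sigma_1^\perp)$ with the transposition coming from the duality, but once the identifications $H(\Sigma_1^\perp)=H(\sigma_1)$ and $\,^tH_\alpha\sim H_\alpha$ are in place, the matching with Step~2 is automatic.
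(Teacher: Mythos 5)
Your proposal is correct and follows essentially the same route as the paper: items (1.i)--(1.ii) come from the classification of subalgebras of $\gh(\sigma_1)$ up to $H(\sigma_1)$-conjugation in Proposition~\ref{conj1}, and items (1.iii)--(1.iv) come from Proposition~\ref{perp} together with the observations that $H^0(\sigma_1)$ and $H_\infty(\sigma_1)$ are transposition-stable while $\,^tH_\alpha(\sigma_1)=H_{-\alpha}(\sigma_1)$ is carried back to $H_\alpha(\sigma_1)$ by $\Lambda$, which also preserves $\Sigma_1^\perp$. Your explicit remark that $H(\Sigma_1^\perp)=H(\sigma_1)$ (so that the $\Lambda$-conjugation really is an $MA$-conjugation fixing the normal factor) is a point the paper leaves implicit, but it is the same argument.
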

\begin{proof}
Items (1.i) and (1.ii) are clear, and arise by taking first the full two-dimensional algebra $\gh(\sigma_1)$ and then its one-dimensional subalgebras.
Now, the groups $H^0(\sigma_1)$ and $H_\infty(\sigma_1)$ are closed under transposition, whereas $\,^tH_\alpha(\sigma_1)=H_{-\alpha}(\sigma_1)$. However, $\Lambda^\dagger[\Sigma_1^\perp]=\Lambda\Sigma_1^\perp\Lambda=\Sigma_1^\perp$ and
$\Lambda H_{-\alpha}(\sigma_1)\Lambda^{-1}=H_{\alpha}(\sigma_1)$.
Hence, applying
Proposition~\ref{perp} we obtain the groups in (1.iii) and (1.iv). 
\end{proof}
\subsubsection{Signature $(1,1,0)$} Here the relevant group is  $O(1,1)=O(1,1,0)$ together with 
$$
O^*(1,1)=\{h\in GL(2,\R):\,^thI_{1,-1}h=- I_{1,-1}\}.
$$
By Corollary~\ref{canonicalR}, we obtain
$$
H(\sigma_2)=\R_+\times\bigl(O(1,1)\cup O^*(1,1)\bigr).
$$
 Its Lie algebra $\gh(\sigma_1)$ can be written as
\begin{equation*}
\gh(\sigma_2)={\mathfrak{so}}(1,1)\oplus\R=\{\alpha \sigma_5+\beta I:\alpha,\beta\in\R\}.
\end{equation*}
The non trivial subalgebras are the vector subspaces of $\gh(\sigma_2)$ of dimension $1$. Put
\begin{equation*}
\gh_\infty(\sigma_2)=\sp\{\sigma_5\}
\end{equation*}
and, for $\alpha\in\R$,
\begin{equation*}
\gh_\alpha(\sigma_2)=\sp\{I+\alpha \sigma_5\}.
\end{equation*}
\begin{prop}\label{prop:conj2} Take $\alpha_1,\alpha_2\in\R\cup\{\infty\}$. Then  $\gh_{\alpha_1}(\sigma_2)$ is conjugate to $\gh_{\alpha_2}(\sigma_2)$ by an element of $H(\sigma_2)$ if and only if $\alpha_1=\pm\alpha_2$.
\end{prop}
\begin{proof}
Take $g\in H(\sigma_2)$.
Since scalars commute with everything, we  can assume that $g\in  O(1,1)\cup O^*(1,1)$. The following relations are straightforward:
\[
O(1,1)=\{\pm A_t, \pm \Lambda A_t:t\in\R\},\quad O(1,1)^*=\sigma_5 \cdot O(1,1).
\]
Since $A_t\,\sigma_5 \,A_t^{-1}=\sigma_5$ and $\Lambda\,\sigma_5\,\Lambda=-\sigma_5$, the algebra $\gh_\infty(\sigma_2)$ is not conjugate to any other one in the class. Finally, we have
\begin{align*}
A_t (I+\alpha \sigma_5)A_t^{-1}&=I+\alpha \sigma_5     \\
\Lambda A_t (I+\alpha \sigma_5) A_t^{-1} \Lambda &= \Lambda (I+\alpha \sigma_5) \Lambda = I-\alpha \sigma_5 \\
\sigma_5(I+\alpha \sigma_5)\sigma_5&=
I+\alpha \sigma_5,
\end{align*}
whence the result.
\end{proof}

Finally, it follows from
$\exp t(I+\alpha \sigma_5)=e^t A_{\alpha t}$
that the connected  subgroups of $Q$ whose Lie algebras  are $\gh(\sigma_2)$, $\gh_\infty(\sigma_2)$ and
$\gh_\alpha(\sigma_2)$, respectively, are
\begin{align*}
H^0(\sigma_2)&=SO^0(1,1)\times\R_+\\
H_\infty(\sigma_2)&=SO^0(1,1)\\
H_\alpha(\sigma_2)&=\{e^t A_{\alpha t}:t\in\R\},\qquad\alpha\in[0,\infty).
\end{align*}
Here  is the second list of groups in the class $\cE$.
\begin{prop}\label{list2}The following is a complete list, up to $MA$-conjugation, of the groups in $\cE$ whose normal factor is equal or orthogonal to $\Sigma_2=\sp\{\sigma_2\}$:
$$
\begin{array}{ll}
\text{ (2.i) } \Sigma_2\rtimes H^0(\sigma_2)
&\text{ (2.iii) }  \Sigma_2^\perp\rtimes H^0(\sigma_2)\\
\text{ (2.ii) } \Sigma_2\rtimes H_\alpha(\sigma_2), \text{with }\alpha\in[0,+\infty]
\hskip0.2truecm
&\text{ (2.iv) } \Sigma_2^\perp\rtimes H_\alpha(\sigma_2),\text{with } \alpha\in[0,+\infty].
\end{array}
$$
\end{prop}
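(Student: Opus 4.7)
The plan mirrors the proof of Proposition~\ref{list1} and divides naturally into two halves, according to whether the normal factor is $\Sigma_2$ or $\Sigma_2^\perp$.

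First I would handle items (2.i) and (2.ii), i.e.\ the groups in $\cE_2$ with normal factor equal to $\Sigma_2$. By Proposition~\ref{conjugation}, an $MA$-conjugation preserving $\Sigma_2$ acts on the homogeneous factor as conjugation by some $h_0\in H(\sigma_2)$. So the classification reduces to enumerating the connected non-trivial Lie subgroups of $H(\sigma_2)$ up to $H(\sigma_2)$-conjugation, equivalently the non-zero Lie subalgebras of $\gh(\sigma_2)$ up to the same conjugation. The two-dimensional case is $\gh(\sigma_2)$ itself, whose integrated subgroup is $H^0(\sigma_2)$, giving (2.i); the one-dimensional subalgebras are classified by Proposition~\ref{conj2} as $\gh_\alpha(\sigma_2)$ with $\alpha\in[0,+\infty]$, whose corresponding connected subgroups are precisely the $H_\alpha(\sigma_2)$ defined just before the statement, giving (2.ii).

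Second I would obtain items (2.iii) and (2.iv) by applying the duality of Proposition~\ref{perp}: $\Sigma_2\rtimes H\in\cE_2$ if and only if $\Sigma_2^\perp\rtimes{}^tH\in\cE_2$, and every group in $\cE_2$ with normal factor $\Sigma_2^\perp$ arises in this way since $K$ preserves $\Sigma_2^\perp$ iff ${}^tK$ preserves $\Sigma_2$. The key simplification compared with the $\sigma_1$ case is that every homogeneous factor in (2.i) and (2.ii) is transposition-invariant. Indeed, both $I$ and $\sigma_5$ are symmetric, hence $A_{\alpha t}=\exp(\alpha t\,\sigma_5)$ is symmetric, and consequently every element of $H^0(\sigma_2)=SO^0(1,1)\times\R_+$ and of $H_\alpha(\sigma_2)=\{e^tA_{\alpha t}:t\in\R\}$ is symmetric. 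Therefore ${}^tH^0(\sigma_2)=H^0(\sigma_2)$ and ${}^tH_\alpha(\sigma_2)=H_\alpha(\sigma_2)$, and dualising (2.i) and (2.ii) produces (2.iii) and (2.iv) directly, with no further $\Lambda$-conjugation needed to fold a parameter $-\alpha$ back into $[0,+\infty]$.

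The main (and only real) obstacle has already been surmounted in Proposition~\ref{conj2}. Once the correspondence ``$MA$-conjugation preserving $\Sigma_2$ equals $H(\sigma_2)$-conjugation on the homogeneous factor'' is read off Proposition~\ref{conjugation}, the remaining points are bookkeeping: verifying transposition-invariance of the listed homogeneous factors (immediate from the symmetry of $I$ and $\sigma_5$), and noting that the families (2.i)-(2.ii) and (2.iii)-(2.iv) do not overlap because $\dim\Sigma_2\neq\dim\Sigma_2^\perp$. The substantive difference from the treatment of Proposition~\ref{list1} is precisely the absence of a post-duality relabelling $H_{-\alpha}\rightsquigarrow H_\alpha$, which makes the present argument marginally shorter.
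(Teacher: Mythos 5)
Your proposal is correct and follows essentially the same route as the paper, whose proof simply says to argue as for Proposition~\ref{list1} while observing that $H^0(\sigma_2)$, $H_\infty(\sigma_2)$ and $H_\alpha(\sigma_2)$ are all closed under transposition. Your explicit justification of that closure (every element of these groups is a symmetric matrix, since $I$ and $\sigma_5$ are symmetric) and your observation that this removes the need for the $\Lambda$-conjugation step of the $\sigma_1$ case are exactly the points the paper leaves implicit.
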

\begin{proof}
Argue as in the proof of Proposition~\ref{list1}, but notice that this time $H^0(\sigma_2)$, 
$H_\infty(\sigma_2)$ and $H_\alpha(\sigma_2)$ are all closed under transposition.
\end{proof}

\subsubsection{Signature $(1,0,1)$} The group $O(1,0,1)$ is easily computed to be 
$$
O(1,0,1)=\bigl\{\begin{bmatrix}
\pm1 &0 \\
b&a \\
\end{bmatrix}
:a,b\in\R, a\not=0\bigr\},
$$
and $O^*(1,0,1)=\emptyset$. The Lie algebra of $O(1,0,1)$  is
$$
{\mathfrak{so}}(1,0,1)=\bigl\{\begin{bmatrix}
0 &0 \\
b&a \\
\end{bmatrix}
:a,b\in\R\bigr\}.
$$
Clearly,  the identity component  $O^0(1,0,1)$ is isomorphic to the ``$ax+b$'' group.
By Corollary~\ref{canonicalR}, the symmetrizers are
\begin{align*}
H(\sigma_3)&=\bigl\{\ell_{a,b,c}=\begin{bmatrix}
c&0 \\
b&a \\
\end{bmatrix}
:a,b,c\in\R,\;ac\not=0\bigr\}=T,     \\
\gh(\sigma_3)&=\bigl\{\begin{bmatrix}
c&0 \\
b&a \\
\end{bmatrix}
:a,b,c\in\R\bigr\}
\end{align*}
that is,  the group of all nonsingular lower triangular matrices and its Lie algebra. 
We choose $\{I,\sigma_4,B\}$ as a basis of $\gh(\sigma_3)$,
where $B=[\begin{smallmatrix}
0 & 0 \\
1 & 0 \\
\end{smallmatrix} ]$.

First, we analyze the one-dimensional subalgebras in $\gh(\sigma_3)$ up to conjugation by $H(\sigma_3)$.
To this end, parametrizing as in real projective space $\R{\mathbb P}^2$, we put
\begin{align*}
\gh_\infty(\sigma_3)&=\sp\{I\}     \\
\gh_\gamma(\sigma_3)&=\sp\{\gamma I+B\},\hskip2.5truecm \gamma\in\R     \\
\gh_{\gamma,\beta}(\sigma_3)&=\sp\{\gamma I+\beta B+\sigma_4\},\hskip1truecm \gamma,\beta\in\R.
\end{align*}
\begin{prop}\label{onesigma3} Among the one dimensional Lie algebras listed above, the only conjugacies by elements in $H(\sigma_3)=T$ are the following:
\begin{itemize}
\item[(a)] $\gh_\gamma(\sigma_3)$ is conjugate to $\gh_1(\sigma_3)$,  for every real number $\gamma\not=0$, 
\item[(b)] $\gh_{\gamma,\beta}(\sigma_3)$  is conjugate to  $\gh_{\gamma,\beta'}(\sigma_3)$, for every $\gamma,\beta,\beta'\in\R$.
\end{itemize}
\end{prop}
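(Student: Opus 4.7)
The plan is to compute the adjoint action of a generic $\ell_{a,b,c}\in T=H(\sigma_3)$ on the basis $\{I,\sigma_4,B\}$ of $\gh(\sigma_3)$ and then read off all the orbit information from two simple invariants.

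First I would carry out the $2\times 2$ matrix calculation, which will show that $\ell_{a,b,c}$ acts trivially on $I$, scales $B$ by the factor $a/c$, and shifts $\sigma_4$ by $-(b/c)B$. These formulae give (a) at once: applying $\ell_{a,b,c}$ to the generator $\gamma I+B$ of $\gh_\gamma(\sigma_3)$ produces $\gamma I+(a/c)B$, and for any $\gamma\neq 0$ one can arrange $a/c=\gamma$ so that the image is proportional to $I+B$. They also give (b): applying $\ell_{a,b,c}$ to $\gamma I+\beta B+\sigma_4$ produces $\gamma I+\bigl((a/c)\beta-b/c\bigr)B+\sigma_4$, and a single free parameter ($b/c$ with $a=c$) suffices to move the coefficient of $B$ to any prescribed $\beta'$.

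The substantive part, showing that these are the \emph{only} conjugacies, will rely on two invariants produced by the same calculation. The line $\sp\{B\}$ is $T$-stable and the induced action on the quotient $\gh(\sigma_3)/\sp\{B\}$ is trivial, so the image modulo $\sp\{B\}$ of any one-dimensional subalgebra is a conjugation invariant. That image is $\{0\}$ only for $\gh_0(\sigma_3)$; it is $\sp\{\bar I\}$ for $\gh_\infty(\sigma_3)$ and for every $\gh_\gamma(\sigma_3)$ with $\gamma\neq 0$; and it is $\sp\{\gamma\bar I+\bar\sigma_4\}$ for $\gh_{\gamma,\beta}(\sigma_3)$. Since $\bar I$ and $\bar\sigma_4$ are linearly independent in the quotient, this distinguishes all values of $\gamma$ in the third family from each other and from the first two. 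Separately, $\gh_\infty(\sigma_3)$ is central in $\gh(\sigma_3)$, since $[\sigma_4,B]=B$ forces the centre to equal $\sp\{I\}$, while $[\gamma I+B,\sigma_4]=-B\neq 0$ for every $\gamma$; centrality being conjugation-invariant, this separates $\gh_\infty(\sigma_3)$ from the $\gh_\gamma(\sigma_3)$ with $\gamma\neq 0$.

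I do not expect any real obstacle here. The adjoint action on $\gh(\sigma_3)$ is almost trivial (it scales $B$ and shears $\sigma_4$ along $B$), so both the existence of the listed conjugacies and the absence of all others fall out of the single equivariant projection modulo $\sp\{B\}$ together with the explicit centre. The only place that warrants a moment's care is keeping track, in part (b), of which fibre of the $(\gamma,\beta)$-family one lands in after conjugation, which is already encoded in the formula for $\ell_{a,b,c}\sigma_4\ell_{a,b,c}^{-1}$.
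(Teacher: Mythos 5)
Your proposal is correct and rests on the same computation as the paper's proof, namely the adjoint action formulae $\ell_{a,b,c}\,B\,\ell_{a,b,c}^{-1}=\tfrac{a}{c}B$ and $\ell_{a,b,c}\,\sigma_4\,\ell_{a,b,c}^{-1}=\sigma_4-\tfrac{b}{c}B$, from which (a) and (b) follow exactly as you describe. The only difference is one of packaging: where the paper settles the remaining non-conjugacies ``by inspection'' of these two formulae, you record the same information through two explicit invariants (the image in the quotient $\gh(\sigma_3)/\sp\{B\}$, on which the action is trivial, and membership in the centre $\sp\{I\}$), which makes the exhaustiveness claim slightly more self-contained but does not change the argument.
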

\begin{proof} A direct computation gives
\begin{subequations}
\begin{align}
&\ell_{a,b,c}\,B\,\ell_{a,b,c}^{-1}=\frac{a}{c}\,B, \label{conj1}\\
&\ell_{a,b,c}\,\sigma_4\,\ell_{a,b,c}^{-1}=-\frac{b}{c}\,B+\sigma_4. \label{conj2}
\end{align}
\end{subequations}
From \eqref{conj1} we infer that $\gh_0(\sigma_3)$ cannot be conjugate to either $\gh_\infty(\sigma_3)$ or to any of the algebras $\gh_\gamma(\sigma_3)$, for any $\gamma\not=0$. Also, \eqref{conj1} yields
$$
\ell_{\gamma,0,1}\,(\gamma I+B)\,\ell_{\gamma,0,1}^{-1}=\gamma(I+B)
$$
and statement (a) follows. Again, \eqref{conj1} yields
$$
\ell_{a,b,c}\,(\gamma I+B)\,\ell_{a,b,c}^{-1}=\gamma I+\frac{a}{c}\,B,
$$
which shows that none of the algebras $\gh_\gamma(\sigma_3)$ can possibly be conjugate to any of the algebras 
$\gh_{\gamma,\beta}(\sigma_3)$. Finally, from \eqref{conj1} and \eqref{conj2} we have
$$
\ell_{a,b,c}\,(\gamma I+\beta B+\sigma_4)\,\ell_{a,b,c}^{-1}=\gamma I+\frac{\beta a-b}{c}\,B+\sigma_4,
$$
whence (b).
\end{proof}

By the above proposition,  the relevant one-dimensional subalgebras of $\gh(\sigma_3)$ are
$\gh_0(\sigma_3)$, $\gh_1(\sigma_3)$, $\gh_\infty(\sigma_3)$ and  the family $\{\gh_{\gamma,0}(\sigma_3):\gamma\in\R\}$. The corresponding one-dimensional connected Lie subgroups of $H(\sigma_3)$ are
\begin{align*}
H_0(\sigma_3)&=
\bigl\{\begin{bmatrix}
1 & 0 \\
t & 1 \\
\end{bmatrix}
:t\in\R\bigr\}\\
H_1(\sigma_3)&=\bigl\{e^t\begin{bmatrix}
                                         1 & 0 \\
                                         t & 1 \\
                                       \end{bmatrix}
:t\in\R\bigr\}\\
H_\infty(\sigma_3)&=\bigl\{e^t\begin{bmatrix}
                                         1 & 0 \\
                                         0 & 1 \\
                                       \end{bmatrix}
:t\in\R\bigr\}\\
H_{\gamma,0}(\sigma_3)&=\bigl\{\begin{bmatrix}
                                         e^{\gamma t} & 0 \\
                                         0 & e^{(\gamma+1)t} \\
                                       \end{bmatrix}
:t\in\R\bigr\},\qquad\gamma\in\R.
\end{align*}
Next we put
\begin{align*}
\gk_{0}(\sigma_3)&=\sp\{I,\sigma_4\}	\\
\gk_\infty(\sigma_3)&=\sp\{I,B\}     \\
\gl_\gamma(\sigma_3)&=\sp\{B,\gamma I+\sigma_4\},\qquad \gamma\in\R.   
\end{align*}

\begin{prop}\label{twosigma3}  Up to conjugation by elements in $H(\sigma_3)$, there are no  two-dimensional Lie subalgebras of $\gh(\sigma_3)$ other than those listed above, which are mutually not conjugate.
\end{prop}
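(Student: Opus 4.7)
The plan is to exploit the fact that $I$ is central in $\gh(\sigma_3)$ and that the only nontrivial bracket among the basis vectors is $[\sigma_4,B]=B$ (a direct computation: $\sigma_4 B=B$ while $B\sigma_4=0$). Since $I$ is fixed by any conjugation, the property $I\in\gk$ is $H(\sigma_3)$-invariant, and \eqref{eq:BcdB'cd} shows that conjugation scales $B$, so the property $B\in\gk$ is also invariant. These two bits will distinguish the three families at the end.

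I first split into cases according to whether $I\in\gk$. If $I\in\gk$, write $\gk=\sp\{I,Z\}$ with $Z=\beta\sigma_4+\delta B\in\sp\{\sigma_4,B\}$; closure under the bracket is automatic since $I$ is central, so every such plane is a subalgebra. If $\beta=0$ then $\gk=\gk_\infty(\sigma_3)$. If $\beta\neq 0$, I rescale so $\beta=1$ and use \eqref{conj2} to choose $b$ with $\ell_{a,b,c}\sigma_4\ell_{a,b,c}^{-1}=\sigma_4-(b/c)B$ killing the remaining $B$-component; this reduces $\gk$ to $\gk_0(\sigma_3)$.

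If $I\notin\gk$, then the projection $\gh(\sigma_3)\to\gh(\sigma_3)/\sp\{I\}\simeq\sp\{\sigma_4,B\}$ is injective on $\gk$, hence a linear isomorphism, so $\gk$ is the graph of a linear map and can be written $\gk=\sp\{\sigma_4+\gamma I,\,B+\delta I\}$ for unique $\gamma,\delta\in\R$. The closure condition yields $[\sigma_4+\gamma I,B+\delta I]=B\in\gk$, and for $B$ to lie in $\sp\{\sigma_4+\gamma I,B+\delta I\}$ we must have $\delta=0$; then $\gk=\sp\{B,\gamma I+\sigma_4\}=\gl_\gamma(\sigma_3)$.

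For non-conjugacy, the invariants above already separate the three families: $\gk_0$ contains $I$ but not $B$; $\gk_\infty$ contains both $I$ and $B$; each $\gl_\gamma$ contains $B$ but not $I$. Within the $\gl_\gamma$ family, I apply \eqref{conj1} and \eqref{conj2} to get
\[
\ell_{a,b,c}(\gamma I+\sigma_4)\ell_{a,b,c}^{-1}=\gamma I+\sigma_4-\tfrac{b}{c}B,
\]
so $\ell_{a,b,c}\gl_\gamma\ell_{a,b,c}^{-1}=\sp\{B,\gamma I+\sigma_4-(b/c)B\}=\gl_\gamma$, i.e.\ each $\gl_\gamma$ is preserved setwise under every $T$-conjugation. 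Hence $\gl_{\gamma_1}$ and $\gl_{\gamma_2}$ are not conjugate for $\gamma_1\neq\gamma_2$. There is no serious obstacle here; the only delicate point is the graph-presentation argument in the case $I\notin\gk$, which forces $B\in\gk$ and collapses the one-parameter family $\delta$ to zero, leaving only the parameter $\gamma$.
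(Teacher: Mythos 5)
Your proof is correct, and it organizes the case analysis differently from the paper. The paper splits on whether $B\in\gh$: if so, a basis $\{B,\alpha\sigma_4+\gamma I\}$ yields $\gk_\infty(\sigma_3)$ or $\gl_\gamma(\sigma_3)$; if not, the vanishing of $\alpha_1\beta_2-\alpha_2\beta_1$ in \eqref{X1X2} forces $I$ into the algebra, and the second basis vector is conjugated onto $\sigma_4$ via \eqref{conj2}, giving $\gk_0(\sigma_3)$. You instead split on whether $I\in\gk$, and when $I\notin\gk$ you use the graph presentation over $\sp\{\sigma_4,B\}$ to force $B\in\gk$ and $\delta=0$. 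The two case splits are dual to one another and rest on the same two computations \eqref{conj1} and \eqref{conj2}, so neither buys much over the other in the enumeration itself. Where your write-up genuinely adds something is the non-conjugacy step: the paper dispatches it with ``follows by inspection,'' whereas you isolate two honest conjugation invariants --- $I\in\gk$ (because $I$ is central, hence fixed) and $B\in\gk$ (because \eqref{conj1} shows $\sp\{B\}$ is stable under all of $T=H(\sigma_3)$) --- which separate the three families, and you then verify that each $\gl_\gamma(\sigma_3)$ is normalized by every element of $T$, so distinct values of $\gamma$ give non-conjugate algebras. That turns the pairwise non-conjugacy of the list from an exercise into a documented fact.
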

\begin{proof} We begin by observing that  the only non trivial bracket among the elements in $\{I,\sigma_4,B\}$  
is of course $[\sigma_4,B]=B$. Assume that $\gh$ is a two-dimensional subalgebra of $\gh(\sigma_3)$ and suppose that $\gh=\sp\{X_1,X_2\}$, with 
\begin{align*}
X_1&=\alpha_1\sigma_4+\beta_1B+\gamma_1I     \\
X_2&=\alpha_2\sigma_4+\beta_2B+\gamma_2I.
\end{align*}
Evidently, requiring that $\gh$ is a Lie algebra is equivalent to asking that
\begin{equation}
[X_1,X_2]=(\alpha_1\beta_2-\alpha_2\beta_1)B
\label{X1X2}
\end{equation}
belongs to $\gh$. If $B\in\gh$, then this is obvious. In this case we may suppose that $X_1=B$ and consequently $X_2=\alpha \sigma_4+\gamma I$. If $\alpha=0$ we get $\gk_\infty(\sigma_3)$, otherwise we set $\alpha=1$ and we get $\gl_\gamma(\sigma_3)$.
If $B\not\in\gh$, then \eqref{X1X2} yields $(\alpha_1\beta_2-\alpha_2\beta_1)=0$. This means that the vectors 
$\alpha_1\sigma_4+\beta_1B$ and $\alpha_2\sigma_4+\beta_2B$ are linearly dependent; hence there exists a linear combination $\lambda X_1+\mu X_2$ that is equal to $I$, which we choose as basis vector for $\gh$ in place, say, of $X_2$. By subtracting off  $\gamma_1I$ from $X_1$, we may thus suppose that the other basis vector is $X_1=\alpha_1\sigma_4+\beta_1B$. If $\alpha_1=0$, then we get again $\gk_\infty(\sigma_3)$. Hence we put
$\alpha_1=1$ and obtain  that
$$
\gh=\sp\{I,\beta B+\sigma_4\}
$$
for some $\beta\in\R$. But this is conjugate to $\gk_{0}(\sigma_3)$, because by \eqref{conj1} and \eqref{conj2} we have
$$
\ell_{a,b,c}\,(\beta B+\sigma_4)\,\ell_{a,b,c}^{-1}=\frac{\beta a-b}{c}\,B+\sigma_4,
$$
which can be made equal to $\sigma_4$ because $a\not=0$.

It remains to be shown that there are no conjugate pairs in the list.  This follows by inspection, taking into account that the only possibilities are given by \eqref{conj1} and \eqref{conj2}.
\end{proof}

By the above proposition,  the relevant two-dimensional subalgebras of $\gh(\sigma_3)$ are
$\gk_{0}(\sigma_3)$,
$\gk_\infty(\sigma_3)$, and the family
$\{\gl_\gamma(\sigma_3):\gamma\in\R\}$. The corresponding two-dimensional connected Lie subgroups of $H(\sigma_3)$ are
\begin{align*}
K_0(\sigma_3)&=
\bigl\{\begin{bmatrix}
e^t & 0 \\
0 & e^s \\
\end{bmatrix}
:s,t\in\R\bigr\}\\
K_\infty(\sigma_3)&=\bigl\{\begin{bmatrix}
                                         e^t & 0 \\
                                         s & e^t \\
                                       \end{bmatrix}
:s,t\in\R\bigr\}\\
L_{\gamma}(\sigma_3)&=\bigl\{\begin{bmatrix}
                                         e^{\gamma t} & 0 \\
                                         s & e^{(\gamma+1)t} \\
                                       \end{bmatrix}
:s,t\in\R\bigr\},\qquad\gamma\in\R.
\end{align*}

We summarize the above discussion
  in the following proposition, which provides the third list of
  groups in the class $\cE$.
\begin{prop}
\label{list3}
The following is a complete list, up to $MA$-conjugation, of the groups in $\cE$ whose normal factor is equal or orthogonal to $\Sigma_3=\sp\{\sigma_3\}$:
$$
\begin{array}{ll}
\text{ (3.i) } \Sigma_3\rtimes H^0(\sigma_3)
&
\text{ (3.ix) }\Sigma_3^\perp\rtimes \,^tH^0(\sigma_3)\\
\text{ (3.ii) } \Sigma_3\rtimes H_0(\sigma_3)
&
\text{ (3.x) }\Sigma_3^\perp\rtimes \, ^tH_0(\sigma_3)\\
\text{ (3.iii) }\Sigma_3\rtimes H_1(\sigma_3)
&
\text{ (3.xi) }\Sigma_3^\perp\rtimes \,^tH_1(\sigma_3)\\
\text{ (3.iv) }\Sigma_3\rtimes H_\infty(\sigma_3)
&
\text{ (3.xii) }\Sigma_3^\perp\rtimes\,^tH_\infty(\sigma_3)\\
\text{ (3.v) }\Sigma_3\rtimes H_{\gamma,0}(\sigma_3),\;\gamma\in\R
\hskip2truecm
&
\text{ (3.xiii) }\Sigma_3^\perp\rtimes H_{\gamma,0}(\sigma_3),\;\gamma\in\R\\
\text{ (3.vi) }\Sigma_3\rtimes K_0(\sigma_3)
&
\text{ (3.xiv) }\Sigma_3^\perp\rtimes K_0(\sigma_3)\\
\text{ (3.vii) }\Sigma_3\rtimes K_{\infty}(\sigma_3)
&
\text{ (3.xv) }\Sigma_3^\perp\rtimes\,^tK_{\infty}(\sigma_3)\\
\text{ (3.viii) }\Sigma_3\rtimes L_{\gamma}(\sigma_3),\;\gamma\in\R
&
\text{ (3.xvi) }\Sigma_3^\perp\rtimes \,^tL_{\gamma}(\sigma_3),\;\gamma\in\R.
\end{array}
$$
\end{prop}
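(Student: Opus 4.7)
The plan is to imitate the structure of Propositions~\ref{list1} and~\ref{list2}. What is new in the signature $(1,0,1)$ case is that $H(\sigma_3)=T$ is non-abelian and admits connected subgroups of dimensions one, two, and three, all of which must appear in the list.

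First, I would observe that by Proposition~\ref{conjugation} the $MA$-equivalence of two groups in $\cE_2$ whose normal factor is $\Sigma_3$ reduces to $H(\sigma_3)$-conjugation of their homogeneous factors. The classifications of one- and two-dimensional subalgebras of $\gh(\sigma_3)$ up to $T$-conjugation are already supplied by Propositions~\ref{onesigma3} and~\ref{twosigma3}; adjoining the full algebra $\gh(\sigma_3)$ exhausts all nontrivial cases. Exponentiating the chosen representatives then yields precisely the groups $H_0(\sigma_3),H_1(\sigma_3),H_\infty(\sigma_3),H_{\gamma,0}(\sigma_3),K_0(\sigma_3),K_\infty(\sigma_3),L_\gamma(\sigma_3)$, together with the identity component $H^0(\sigma_3)$ of $T$. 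This delivers items (3.i)--(3.viii), and their pairwise non-equivalence modulo $MA$ is immediate from the analogous statements for subalgebras in Propositions~\ref{onesigma3} and~\ref{twosigma3}.

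Next, I would invoke Proposition~\ref{perp}, which sets up the bijection $\Sigma_3\rtimes H\leftrightarrow \Sigma_3^\perp\rtimes {}^tH$ inside $\cE$. Transposing each of (3.i)--(3.viii) furnishes items (3.ix)--(3.xvi). Since $H_{\gamma,0}(\sigma_3)$ and $K_0(\sigma_3)$ are diagonal, they are self-transpose and account for (3.xiii) and (3.xiv) without a transpose symbol, while the remaining six groups transpose to genuinely upper triangular subgroups and give the other items.

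The subtlest step is to verify that no further $MA$-identification collapses the dual list. From the identity $\langle({}^th)^\dagger[\tau],\sigma\rangle=\langle\tau,h^\dagger[\sigma]\rangle$ at the heart of Proposition~\ref{perp}, the stabilizer of $\Sigma_3^\perp$ inside $GL(2,\R)$ is exactly $\,^tT$. A short computation then shows $({}^tk)({}^tH_1)({}^tk)^{-1}={}^tH_2$ with $k\in T$ if and only if $k^{-1}H_1k=H_2$, so $MA$-conjugacy on the dual side mirrors $T$-conjugacy on the primal side and irredundancy transfers verbatim. The main obstacle is precisely this last check: in the earlier signatures $(2,0,0)$ and $(1,1,0)$ the Weyl-type element $\Lambda$ caused genuine collapses on the dual side, and one must confirm that no analogous phenomenon occurs here --- as it does not, since $T\cap {}^tT$ is only the diagonal subgroup, leaving no room for an unexpected identification.
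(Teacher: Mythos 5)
Your proposal is correct and follows essentially the same route the paper takes (it states Proposition~\ref{list3} without a separate proof precisely because, as you observe, it is the combination of Propositions~\ref{onesigma3} and~\ref{twosigma3}, exponentiation, and the duality of Propositions~\ref{perp} and~\ref{conjugation}(iii)--(iv)). One cosmetic quibble: in the signatures $(2,0,0)$ and $(1,1,0)$ the element $\Lambda$ does not ``collapse'' distinct dual classes but merely permits re-indexing $\,^tH_\alpha=H_{-\alpha}\sim H_\alpha$, which is why those lists drop the transpose; your conclusion for $\sigma_3$ is unaffected.
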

\vskip0.2truecm
\subsection{Classification modulo $\Spnr$ of $\cE$}\label{full}
The question we want to answer is: when are two groups in $\cE$ conjugate by  
$g\in\Spnr$? We now state the main technical lemma, which is a
consequence of the Bruhat decomposition (see \cite{knapp2002}).  We use the following notation
$$
w_0:=\begin{bmatrix}1&0&0&0\\
0&0&0&-1\\
0&0&1&0\\
0&1&0&0
\end{bmatrix}.
$$

\begin{lemma}\label{bruhat} Suppose that $\Sigma_1\rtimes H_1,\Sigma_2\rtimes H_2\in\cE$ are not conjugate modulo $MA$. If $g\in\Spnr$, is such that $g(\Sigma_1\rtimes H_1)g^{-1}=\Sigma_2\rtimes H_2$, then $g$ is of the form
\begin{equation}
g=g(\sigma',h')^{-1}w_0g(a_0\sigma_4,h),
\label{br}
\end{equation}
for some  $\sigma'\in\Symdr$, $h,h'\in GL(2,\R)$, some $a_0\in\R$.
This can only happen if 
 $h^\dagger[\Sigma_1]\subseteq \sigma_4^\perp$ and $ hH_1h^{-1}\subseteq T$. Furthermore $a_0\neq0$ only if 
 $$
 hH_1h^{-1}\subseteq
 \{\left[\begin{smallmatrix}\alpha&0\\\beta&1\end{smallmatrix}\right]:\alpha>0,\beta\in\R\}.
 $$
\end{lemma}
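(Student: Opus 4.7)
The plan is to combine the Bruhat decomposition of $\Sptwor$ relative to the Siegel parabolic $Q$ with direct block-matrix computations that detect when $w_0$-conjugation preserves $Q$. The starting observation is that for any $q_1,q_2\in Q$ with upper-left blocks $h_1,h_2$ and any $g=\left[\begin{smallmatrix}A&B\\C&D\end{smallmatrix}\right]\in\Sptwor$, the upper-right block of $q_1gq_2$ equals $h_1Bh_2^\sharp$, so it has the same rank as $B$. For $d=2$ this yields exactly three $Q$-double cosets in $\Sptwor$, represented by $I_4$, by the element $w_0$ of the statement (whose $B$ block is $-\sigma_4$, of rank one), and by $J$.

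I would first rule out the extreme cells. If $g\in Q$, writing $g=g(\sigma',I_2)g(0,h)$ and using Proposition~\ref{conjugation} together with \eqref{semidirect} shows that the $N$-factor only introduces a $\cT$-type cocycle; since both $G_1$ and $G_2$ lie in $\cE$, this cocycle must vanish modulo $\Sigma_2$, so the conjugation already arises from $g(0,h)\in MA$, contradicting the hypothesis. If $g\in QJQ$, I would write $g=q_1Jq_2$ and set $\widetilde G=q_2G_1q_2^{-1}\subseteq Q$; its vector component is $h_2^\dagger[\Sigma_1]\neq 0$. The requirement $gG_1g^{-1}\subseteq Q$ becomes $J\widetilde GJ^{-1}\subseteq Q$ which, combined with $\widetilde G\subseteq Q$ and the identity $J^{-1}QJ=\bar Q$ (the opposite parabolic), forces $\widetilde G\subseteq Q\cap\bar Q=MA$, impossible. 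Hence $g\in Qw_0Q$.

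Next I normalise the factorisation $g=q_1w_0q_2$ as in \eqref{br}. A direct computation shows that the upper-right block of $w_0 g(\sigma,I_2) w_0^{-1}$ equals $-\sigma_4\sigma\sigma_4=-\langle\sigma,\sigma_4\rangle\,\sigma_4$, which vanishes precisely when $\sigma\in\sigma_4^\perp$. Absorbing the $\sigma_4^\perp$-part of the $N$-factor of $q_2$ into $q_1$, I obtain $q_2=g(a_0\sigma_4,h)$ for some $a_0\in\R$ and $h\in GL(2,\R)$.

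For the final assertions, let $\widetilde G_1=g(a_0\sigma_4,h)G_1g(a_0\sigma_4,h)^{-1}$; by \eqref{semidirect} its generic element has the form $g\bigl(\sigma'+a_0(\sigma_4-h_0'^\dagger[\sigma_4]),h_0'\bigr)$ with $\sigma'\in h^\dagger[\Sigma_1]$ and $h_0'\in hH_1h^{-1}$. A direct calculation of the upper-right block of $w_0g(\sigma,h_0)w_0^{-1}$, with $h_0=\left[\begin{smallmatrix}a&b\\c&d\end{smallmatrix}\right]$ and $\sigma=\left[\begin{smallmatrix}x&y\\y&z\end{smallmatrix}\right]$, yields
\[
\begin{bmatrix}0 & b\\ b/\det h_0 & -(yb+zd)\end{bmatrix}.
\]
Vanishing of the $(1,2)$-entry for every $h_0'\in hH_1h^{-1}$ forces $hH_1h^{-1}\subseteq T$; the remaining entry then reduces to $z=0$, i.e.\ the $(2,2)$-entry of $\sigma'+a_0(\sigma_4-h_0'^\dagger[\sigma_4])$ must vanish. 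Specialising to $h_0'=I_2$ gives $h^\dagger[\Sigma_1]\subseteq\sigma_4^\perp$; specialising to $\sigma'=0$ gives $a_0\bigl(1-\langle h_0'^\dagger[\sigma_4],\sigma_4\rangle\bigr)=0$, and a short computation with $h_0'=\left[\begin{smallmatrix}\alpha&0\\\beta&\delta\end{smallmatrix}\right]\in T$ gives $\langle h_0'^\dagger[\sigma_4],\sigma_4\rangle=1/\delta^2$. Hence if $a_0\neq 0$ then $\delta^2=1$ throughout the connected group $hH_1h^{-1}$, so $\delta=1$ and, by connectedness with $I_2$, $\alpha>0$, which is the last assertion. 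I expect the main technical obstacle to be the bookkeeping of the $\cT$-type cocycle $a_0(\sigma_4-h_0'^\dagger[\sigma_4])$ when carrying it through the $w_0$-conjugation; once that is disentangled, the remaining conditions collapse to the two scalar equations $b=0$ and $z=0$.
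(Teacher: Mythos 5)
Your argument is correct and reaches the same three cases and the same final block computation as the paper, but it gets there by a genuinely different first reduction. The paper uses the Bruhat decomposition with respect to the \emph{minimal} parabolic $P$, writes $g=p_2^{-1}wp_1$ with $w$ ranging over eight Weyl representatives, and whittles these down to $I_4$, $-J$, $w_0$ by absorbing the permutation factor and \eqref{nots1} into $MA$; you instead decompose $\Sptwor$ into the three $Q$-double cosets detected by the rank of the upper-right block $B$ (your identity ``upper-right block of $q_1gq_2$ equals $h_1Bh_2^\sharp$'' is correct). The two routes then converge: your exclusion of the open cell via $\widetilde G\subseteq Q\cap J^{-1}QJ=MA$ is a cleaner version of the paper's computation of $-Jg(\sigma,I_2)J$; your treatment of $g\in Q$ spells out more carefully than the paper's terse ``this yields $p\in MA$'' why the $N$-factor is harmless (the induced cocycle lies in $\cT$ and must be equivalent to zero because $G_2\in\cE_2$, so by the uniqueness of the triple in Proposition~\ref{triplette} the $MA$-part alone already conjugates $G_1$ to $G_2$); and your matrix $\left[\begin{smallmatrix}0&b\\b/\det h_0&-(yb+zd)\end{smallmatrix}\right]$ is exactly the paper's $x=s_0h's_1-s_1\tau_1(h')h's_1-s_1h'^\sharp s_0$ evaluated, leading to the same conditions $b=0$, $z=0$ and $a_0(1-\delta^{-2})=0$. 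What your version buys is a shorter case analysis and an a priori reason (rank invariance) why only one nontrivial cell matters; what it costs is that you assert, rather than prove, that the rank of $B$ is a \emph{complete} invariant of the $Q$-double cosets, i.e.\ that every symplectic matrix whose upper-right block has rank one lies in $Qw_0Q$. This is the standard relative Bruhat decomposition $Q\backslash\Sptwor/Q\simeq W_Q\backslash W/W_Q$, and the quickest patch is to note that each of the eight cells $PwP$ lies in $QwQ$, and that the four Weyl representatives whose $B$-block has rank one all differ from $w_0$ by elements of $MA$ (the permutation $\pi$ and \eqref{nots1}), so their $Q$-double cosets coincide with $Qw_0Q$. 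With that one line added, your proof is complete.
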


The proof of Lemma~\ref{bruhat} is based on the Bruhat decomposition of $Sp(2,\R)$, that expresses $Sp(2,\R)$ as the disjoint union
$$
Sp(2,\R)=\bigcup_{w\in W}PwP
$$
of the double cosets $PwP$ of the minimal parabolic group $P$, parametrized by the elements in the Weyl group $W$. More precisely, 
$$
P=\left\{\begin{bmatrix} \ell&0\\ \sigma\ell&\ell^\sharp\end{bmatrix}: \ell\in T, \;\sigma\in\Symdr  \right\},
$$
and, with slight abuse of notation,   a representative\footnote{Formally, $W=N(D)/D$ where $D$ is the maximal torus in $Sp(2,\R)$ consisting of its positive diagonal matrices, and $N(D)$ is its normalizer.
We are indicating a set of representatives in $N(D)$.} of the Weyl group element $w\in W$  may be taken in  $Sp(2,\R)$ as a matrix of the form
$$
w=
\begin{bmatrix}S_+&-S_-\\S_-&S_+\end{bmatrix}
\begin{bmatrix}\pi&0\\0&\pi\end{bmatrix}
$$
where $\pi$ is either $I_2$ or $\sigma_5$, and where $S_-=I_2-S_+$, with $S_+$ one of
$$
s_0=\begin{bmatrix}1&0\\0&0\end{bmatrix}, \quad
s_1=\begin{bmatrix}0&0\\0&1\end{bmatrix}, \quad
I_2=\begin{bmatrix}1&0\\0&1\end{bmatrix}, \quad
0=\begin{bmatrix}0&0\\0&0\end{bmatrix}.
$$
As is well-known, $W$ has $8$ elements. Evidently, $w_0$ corresponds to $S_+=s_0$ and $\pi=I_2$. Notice that, $W$ is a semidirect product and in particular
\begin{equation}
 \begin{bmatrix}\sigma_5&0\\0&\sigma_5\end{bmatrix}
 \begin{bmatrix}s_0&-s_1\\s_1&s_0\end{bmatrix}
  \begin{bmatrix}\sigma_5&0\\0&\sigma_5\end{bmatrix}
  =
 \begin{bmatrix}s_1&-s_0\\s_0&s_1\end{bmatrix}.
\label{nots1}
\end{equation}
Also, notice that $\left[\begin{smallmatrix}\sigma_5&0\\0&\sigma_5\end{smallmatrix}\right]\in MA$.
We remind the reader that in the remaining part of this
  section we use Proposition~\ref{triplette}, which establishes a
  canonical description of the Lie subgroups of $Q$ as triples
and, in particular, the class described in Remark~\ref{tauzero}.

\begin{proof}[Proof of Lemma~\ref{bruhat}] First of all, put  $G_1=\Sigma_1\rtimes H_1$,  
$G_2=\Sigma_2\rtimes H_2$ and, according to the Bruhat decomposition, write $g=p_2^{-1}wp_1$ with $p_1,p_2\in P$ and $w\in W$. Therefore
\begin{equation}
p_2G_2p_2^{-1}=w(p_1G_1p_1^{-1})w^{-1}.
\label{brone}
\end{equation}
Clearly, $F_j:=p_jG_jp_j^{-1}$ is a subgroup of $Q$, for $j=1,2$. Also, we can assume that the permutation factor $\pi$ in $w$ is the identity, because it belongs to $MA\subset Q$. By the same token, by \eqref{nots1}, we can suppose that $S_+\not=s_1$. Our assumption is thus
$$
F_2=wF_1w^{-1}.
$$
The proof now proceeds by inspecting the three remaining cases for $w$.

 Suppose $w=-J$, that is $S_+=0$. Upon writing $F_1=(\Sigma,H,\tau)$ and taking
 any element with $h=I_2\in H$, a straightforward computation gives
 $$
 -J\begin{bmatrix}I_2&0\\\sigma&I_2\end{bmatrix}J
 =\begin{bmatrix}I_2&-\sigma\\0&I_2\end{bmatrix},
 $$
 in contradiction with $F_2\subseteq Q$ unless $\sigma=0$. In this case, though, $G_1\not\in\cE$. Hence we may exclude $w=-J$.
 
 Next, suppose $w=I_4$, that is $S_+=I_2$. Going back to \eqref{brone}, we have then $G_2=pG_1p^{-1}$ for some $p\in P$. But this yields $p\in MA$, against the hypothesis.
 
 Finally, suppose $w=w_0$, namely $S_+=s_0$. The conjugation $gG_1g^{-1}=G_2$ can be formally written as in  \eqref{brone}, with the understanding that under the assumption $w=w_0$ we might have to absorb  into $p_1$ a permutation term $\pi$ coming from the Weyl group. We factor
\[
 p_1
 =\begin{bmatrix}
 1&0&0 & 0 \\
 0&1&0 & 0\\
 c&b&1&0\\
 b&0&0&1
 \end{bmatrix}
 \begin{bmatrix}
 1&0&0 & 0\\
 0&1&0 & 0\\
 0&0&1&0\\
 0&a&0&1
 \end{bmatrix}
 \begin{bmatrix}
 h& 0\\
0 &h^\sharp
 \end{bmatrix}
 =g(b\sigma_5+c\sigma_3,I_2)g(a\sigma_4,h).
 \]
As already observed, we cannot assume that $h\in T$. Now, it is easy to check that
\begin{equation}
w_0g(b\sigma_5+c\sigma_3,I_2)w_0^{-1}
=\begin{bmatrix}
 1&0&0 & 0\\
 -b&1&0 & 0\\
 c&0&1&b\\
 0&0&0&1
 \end{bmatrix}\in Q.
\label{sh}
\end{equation}
Therefore, we
have
\begin{align*}
&p_2G_2p_2^{-1}=w(p_1G_1p_1^{-1})w^{-1} \\
&=[w_0g(b\sigma_5+c\sigma_3,I_2)w_0^{-1}]w_0g(a\sigma_4,h)
G_1g(a\sigma_4,h)^{-1}w_0^{-1} [w_0g(b\sigma_5+c\sigma_3,I_2)w_0^{-1}]^{-1}
\end{align*}
and by \eqref{sh} we can absorb the term in square brackets into $p_2\in Q$.
This proves \eqref{br}, because $p_2=g(\sigma',h')$ and $p_1=g(a_0\sigma_4,h)$ for some $a_0\in\R$.

So far we thus have that \eqref{br} holds with $p_2\in Q$, $w=w_0$ and 
$p_1=g(a_0\sigma_4,h)$. Looking at the right hand side of this version of \eqref{br}, we observe that
$$
p_1G_1p_1^{-1}=(\Sigma', H',\tau_1)=G'
$$
where $h^\dagger[\Sigma_1]=\Sigma'$, $H'=hH_1h^{-1}$ and,
  by Remark~\ref{tauzero},
$$
\tau_1(h')=a_0\left(\sigma_4-h'^\dagger[\sigma_4]\right),
\qquad
h'\in H'.
$$
 We start by writing the elements in $G'$ as
\[
 g_{\tau_1}(\sigma',h')=
 \begin{bmatrix}
I_2& 0\\
\sigma'&I_2
 \end{bmatrix}
  \begin{bmatrix}
 I_2& 0\\
\tau_1(h')&I_2
 \end{bmatrix}
  \begin{bmatrix}
 h'&0\\
0&h'^\sharp
 \end{bmatrix}
 \]
 and then we study the effect of conjugation by $w_0$. We thus parametrize
 $$
 \sigma'= 
 \begin{bmatrix}
c&b\\
b&a
 \end{bmatrix},
 \qquad
 h'=
 \begin{bmatrix}
\alpha&\gamma\\
\beta&\delta
 \end{bmatrix},
 $$
 where we must interpret $a(\sigma'),b(\sigma'),c(\sigma')$ and similarly
 $\alpha(h'), \beta(h'),\gamma(h'), \delta(h')$. 
 Computing, we see that
 $$
 w_0 g_{\tau_1}(\sigma',I_2)w_0^{-1}=
 \begin{bmatrix}
 1&0&0&0\\
 -b&1&0&-a\\
 c&0&1&b\\
 0&0&0&1
 \end{bmatrix}
 $$
 is in $Q$ if and only if $a=a(\sigma')\equiv0$ as a function on $\Sigma'$. This means
  $\Sigma'=h^\dagger[\Sigma_1]\subseteq \sigma_4^\perp$.
 Next
 $$
 w_0g_{\tau_1}(0,h')w_0^{-1}=  \begin{bmatrix}
 *&x\\
*&*
 \end{bmatrix}
 $$
 with
 $$
 x=s_0h's_1-s_1\tau_1(h')h's_1-s_1h'^\sharp s_0.
 $$
 Now, only the first summand has a nonzero entry in the upper-left corner, and it is equal to $\gamma$. Therefore $\gamma=\gamma(h')\equiv0$ s a function on $H'$. This is $ hH_1h^{-1}\subseteq T$. Similarly, only the second summand has a nonzero entry in the lower-left corner.
 Furthermore, since $\tau_1(h')=a_0(\sigma_4-h'^\dagger[\sigma_4])$, we have
\begin{equation*}
\tau_1(h')h'=a_0(\sigma_4h'-h'^\sharp \sigma_4)=
a_0 \begin{bmatrix}
0&\beta/\alpha\delta\\
\beta&\delta-1/\delta
 \end{bmatrix},
\label{tauone}
\end{equation*}
whose lower-right corner is $a_0(\delta-1/\delta)$. Therefore, $a_0$ can be different from zero only if the continuous function $\delta$ is $\delta(h')=\pm1$. However, we are working with connected groups, hence $\delta=1$ and so $H'=hH_1h^{-1}\subseteq\{\left[\begin{smallmatrix}\alpha&0\\\beta&1\end{smallmatrix}\right]
 :\alpha>0,\beta\in\R\}$.
\end{proof} 
 
 \begin{remark}\label{nonMA} The last statement of Lemma~\ref{bruhat} has a consequence for the classification problem.  Fix $G=\Sigma\rtimes H\in\cE$, say for example one among the canonical groups determined in the previous section.  If $G$ is conjugate to some other group via an element {\it not} in $MA$, then there must exist $h\in GL(2,\R)$ such that
  $h^\dagger[\Sigma]\subseteq \sigma_4^\perp$ and $ hHh^{-1}\subseteq T$. The first of these conditions forces the determinant of all elements in $\Sigma$ to be less than or equal to zero.
  Therefore, remembering the notation introduced in \eqref{5sigma}, only the following cases can be considered:
  \begin{enumerate}[(a)]
\item if $n=1$, then either $\Sigma=\Sigma_2$ or $\Sigma=\Sigma_3$;
\item if $n=2$, then  $\Sigma=\Sigma_3^\perp$.
\end{enumerate}
 \end{remark}

 \begin{remark} Lemma~\ref{bruhat} may be formulated in a different way. Given $G\in\cE$, if there exists $g\not\in MA$ such that $gGg^{-1}\in\cE$, then
 there must exist $h_0\in GL(2,\R)$ such that 
\begin{equation}
G=g(0,h_0)^{-1}(\Sigma,H,\tau)g(0,h_0)
\label{conj1bis}
\end{equation}
 with $\Sigma\subseteq \sigma_4^\perp$, $H\subseteq T$ and $\tau(h)=a_0(\sigma_4-h^\dagger[\sigma_4])$, hence $\tau\in\cT$ (see Remark~\ref{tauzero}). In this case there exists $\tau'\in\cT$ such that
\begin{equation}
w_0(\Sigma,H,\tau)w_0^{-1}=(\Sigma',H',\tau').
\label{conj2bis}
\end{equation}
 \end{remark}
 The next lemma shows that if \eqref{conj1bis} and \eqref{conj2bis} hold for some $h_0$, then they also hold for $th_0$, for all $t\in T$. This will be used to put $(\Sigma,H,\tau)$ in canonical form.
 \begin{lemma}\label{bruhat2} Suppose that $\Sigma\subseteq \sigma_4^\perp$, $H\subseteq T$ and $\tau(h)=a_0(\sigma_4-h^\dagger[\sigma_4])$ are such that \eqref{conj2} holds
 with $\tau'\in\cT$, for some symmetric $\tau_0$. Then for all $t\in T$ there exists $\tau''\in\cT$ such that
 $$
 w_0g(0,t)(\Sigma,H,\tau)g(0,t)^{-1}w_0^{-1}=(\Sigma'',H'',\tau'').
 $$
 \end{lemma}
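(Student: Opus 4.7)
The plan is to use associativity to reformulate the conjugation in the claim as conjugation of the known group $w_0(\Sigma,H,\tau)w_0^{-1}=(\Sigma',H',\tau')$ by the single element $q_t:=w_0\,g(0,t)\,w_0^{-1}$:
\[
w_0\,g(0,t)(\Sigma,H,\tau)g(0,t)^{-1}w_0^{-1}
= q_t\cdot(\Sigma',H',\tau')\cdot q_t^{-1}.
\]
The crux is to show that $q_t$ itself lies in $Q$ for every $t\in T$; once this is in hand, the result of conjugating a subgroup of $Q$ by an element of $Q$ is again a subgroup of $Q$, hence of the form $(\Sigma'',H'',\tau'')$ by Proposition~\ref{triplette}, and only the structural claim $\tau''\in\cT$ is left to verify.

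The key computation is direct. Writing $t=\bigl[\begin{smallmatrix}\alpha&0\\ \beta&\delta\end{smallmatrix}\bigr]\in T$, multiplication of the four $4\times4$ blocks produces
\[
q_t \;=\; g\bigl(\tfrac{\beta}{\alpha}\sigma_5,\ \mathrm{diag}(\alpha,1/\delta)\bigr)\;\in\; Q.
\]
The crucial fact used here is that $t$ is \emph{lower} triangular: any nonzero $(1,2)$-entry of $t$ would contribute to the upper-right block of $q_t$, pushing it out of $Q$. This is precisely why the hypothesis $t\in T$ cannot be relaxed.

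With $q_t\in Q$ identified, write $q_t=g(\sigma_t,I_2)\,g(0,h_t)$ with $h_t=\mathrm{diag}(\alpha,1/\delta)$ and $\sigma_t=(\beta/\alpha)\sigma_5$. Invoking Remark~\ref{tauzero}, the hypothesis $\tau'\in\cT$ gives a $\tau_0\in\Symdr$ such that $(\Sigma',H',\tau')=g(\tau_0,I_2)\,(\Sigma'\rtimes H')\,g(\tau_0,I_2)^{-1}$, where we momentarily abuse notation for the underlying semidirect product. A routine application of the group law \eqref{semidirect} and the inversion formula \eqref{inv}, together with Proposition~\ref{conjugation} applied to the $MA$-factor $g(0,h_t)$, then gives
\[
q_t\cdot(\Sigma',H',\tau')\cdot q_t^{-1}
= g(\tau_0'',I_2)\cdot\bigl(h_t^\dagger[\Sigma']\rtimes h_t H' h_t^{-1}\bigr)\cdot g(\tau_0'',I_2)^{-1},
\]
where $\tau_0'':=\sigma_t+h_t^\dagger[\tau_0]$. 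This is precisely a triple $(\Sigma'',H'',\tau'')$ with $\tau''(h)=\tau_0''-h^\dagger[\tau_0'']\in\cT$, which is the conclusion.

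The only step that is not pure bookkeeping is the matrix computation $q_t\in Q$; that is where I expect the full proof to do its work. Everything else — the factorization of $q_t$, the absorption of the $MA$-piece into the semidirect factor, and the verification that the $N$-piece contributes only a shift of $\tau_0$ — follows by applying formulas \eqref{semidirect} and \eqref{inv} with no additional input.
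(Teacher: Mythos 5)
Your proposal is correct and follows essentially the same route as the paper: one computes $q_t=w_0\,g(0,t)\,w_0^{-1}=g\bigl(\sigma_t,\mathrm{diag}(\alpha,\delta^{-1})\bigr)\in Q$ (your formula $\sigma_t=(\beta/\alpha)\sigma_5$ agrees with the paper's $\beta\sigma_5$ up to the different parametrization of the lower-left entry of $t$), and then observes that conjugating $(\Sigma',H',\tau')$ by this element of $Q$ sends $\Sigma'$ and $H'$ to their $MA$-conjugates and shifts the symmetric matrix of $\tau'$ to $\sigma_t+h_t^\dagger[\tau']$, which is exactly the paper's formula for $\tau''$. The extra bookkeeping you supply (factoring $q_t$ into its $N$- and $MA$-parts and invoking Remark~\ref{tauzero}) is a correct elaboration of what the paper leaves implicit.
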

 \begin{proof} We parametrize the lower triangular matrices in $T$ by
\begin{equation}
t= \begin{bmatrix}\alpha&0\\\alpha\beta&\delta\end{bmatrix}.
\label{T}
\end{equation}
Then
 $$
 w_0g(0,t)w_0^{-1}=
 g(\begin{bmatrix}0&\beta\\\beta&0\end{bmatrix},
 \begin{bmatrix}\alpha&0\\0&\delta^{-1}\end{bmatrix}):=g'
 $$
 and so $g'(\Sigma',H',\tau')g'^{-1}=(\Sigma'',H'',\tau'')$, where
\begin{align*}
\Sigma''&=\begin{bmatrix}\alpha&0\\0&\delta^{-1}\end{bmatrix}^\dagger[\Sigma']     \\
H''&=\begin{bmatrix}\alpha&0\\0&\delta^{-1}\end{bmatrix}\,H'\,\begin{bmatrix}\alpha&0\\0&\delta^{-1}\end{bmatrix}^{-1}     \\
\tau''&=\begin{bmatrix}0&\beta\\\beta&0\end{bmatrix}+
 \begin{bmatrix}\alpha&0\\0&\delta^{-1}\end{bmatrix}^\dagger[\tau'] .
\end{align*}
In the last line we have identified $\tau',\tau''\in\cT$ with the corresponding symmetric matrices.
 \end{proof}
 
We apply the above lemmata to our classification problem as follows. Take any group $G$ in canonical form. If there exists $g\not\in MA$ that conjugates $G$ to another group in the class $\cE$, then, by Lemma~\ref{bruhat}, there exists $h\in MA$ that maps the vector part $\Sigma$ inside $\sigma_4^\perp$. By Lemma~\ref{bruhat2} we know that any other $ht\in MA$ can be used for this purpose, with $t\in T$, and hence we can reduce the analysis to three possible cases: $\sigma_4^\perp$ itself if $\Sigma$ is bidimensional, and two cases if $n=1$, as the following proposition clarifies.
\begin{prop}\label{Wcanonical} Suppose that $\Sigma$ is a one dimensional vector subspace of $\sigma_4^\perp$.
Then there exists $t\in T$ such that $t^\dagger[\Sigma]$ is generated by:
\begin{enumerate}[(i)]
\item $\sigma_5$ if the signature is  $(1,1,0)$, and $H(\sigma_5)\cap T$ are the diagonal matrices in $GL(2,\R)$;
\item  $\sigma_3$  if the signature is  $(1,0,1)$, and $H(\sigma_3)=T$.
\end{enumerate}
\end{prop}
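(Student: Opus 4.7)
The plan is first to describe $\sigma_4^\perp$ explicitly. Since $\langle \sigma,\sigma_4\rangle$ is the lower-right entry of $\sigma$, we have
\[
\sigma_4^\perp=\bigl\{\begin{bmatrix}a&b\\b&0\end{bmatrix}:a,b\in\R\bigr\},
\]
and any element of this space has determinant $-b^2\le0$. Writing $\Sigma=\sp\{\sigma\}$ with $\sigma=\left[\begin{smallmatrix}a&b\\b&0\end{smallmatrix}\right]\ne0$, the two possible non-degenerate signatures correspond exactly to $b\ne0$ (case (i), signature $(1,1,0)$) and $b=0$, $a\ne0$ (case (ii), signature $(1,0,1)$).

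Case (ii) is then immediate: $\sigma=a\sigma_3$, so $t=I_2$ works and, as already computed in Section~\ref{CF}, $H(\sigma_3)=T$. For case (i), I would parametrize $t\in T$ as $t=\left[\begin{smallmatrix}\alpha&0\\\beta&\delta\end{smallmatrix}\right]$ and compute directly that
\[
t^\dagger[\sigma]=\begin{bmatrix}\alpha^{-2}a-2\alpha^{-2}\delta^{-1}b\beta & (\alpha\delta)^{-1}b\\ (\alpha\delta)^{-1}b & 0\end{bmatrix},
\]
which in particular verifies that $T$ preserves $\sigma_4^\perp$. Choosing $\alpha=\delta=1$ and $\beta=a/(2b)$ (legitimate since $b\ne0$) yields $t^\dagger[\sigma]=b\sigma_5$, so $t^\dagger[\Sigma]=\sp\{\sigma_5\}$ as required.

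To finish part (i) I would compute $H(\sigma_5)\cap T$: for $h=\left[\begin{smallmatrix}\alpha&0\\\beta&\delta\end{smallmatrix}\right]$ one finds
\[
\,^th\,\sigma_5\,h=\begin{bmatrix}2\alpha\beta&\alpha\delta\\ \alpha\delta&0\end{bmatrix},
\]
and requiring this to be a scalar multiple of $\sigma_5$ forces $\alpha\beta=0$, i.e.\ $\beta=0$ since $\alpha\ne 0$. Hence $H(\sigma_5)\cap T$ consists exactly of the diagonal matrices in $GL(2,\R)$.

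The statement is really a sequence of short linear-algebra computations, so there is no serious obstacle; the only small care needed is to verify that $T$ preserves $\sigma_4^\perp$ (so that the normalization by $t$ stays inside the ambient space we are working with) and to exploit the freedom in $\beta$ to kill the $(1,1)$-entry of $\sigma$ whenever $b\ne 0$, which is precisely the mechanism that turns an arbitrary non-degenerate element of $\sigma_4^\perp$ into $\sigma_5$.
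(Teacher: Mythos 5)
Your proposal is correct and follows essentially the same route as the paper: parametrize the generator of $\Sigma$ as $\left[\begin{smallmatrix}a&b\\ b&0\end{smallmatrix}\right]$, compute $t^\dagger[\sigma]$ for lower triangular $t$, and use the freedom in the $(2,1)$-entry of $t$ to annihilate the $(1,1)$-entry of $\sigma$ when $b\neq0$, the case $b=0$ reducing immediately to $\sigma_3$. The only difference is that you additionally spell out the verification that $H(\sigma_5)\cap T$ consists of the diagonal matrices, which the paper's proof leaves implicit; this computation is also correct.
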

\begin{proof}
Denote by
$$
\sigma_0=\begin{bmatrix}c&b\\ b&0\end{bmatrix}
$$
the generator of $\Sigma$ and parametrize as in \eqref{T} the elements in $T$. Then
$$
t^\dagger[\sigma_0]=
\begin{bmatrix}(c\delta-2b\alpha\beta)/\alpha^2\delta&b/\alpha\delta\\
b/\alpha\delta&0
\end{bmatrix}.
$$
If $b=0$ we get case (ii), otherwise we put $b=1$ and we get (i).
\end{proof}
Next we perform the conjugation by $w_0$, as in \eqref{conj2}, under the necessary  conditions on $\Sigma$ and $H$ that must be satisfied, but without assuming that the conjugation produces a group in $\cE$, so that for the resulting triple $(\Sigma',H',\tau')$ we don't know that $\tau'\in\cT$. We start by computing $\Sigma'$ and $H'$.
\begin{lemma}\label{bruhat3} Take $\Sigma\subseteq \sigma_4^\perp$, $H\subseteq T$ and $\tau(h)=a_0(\sigma_4-h^\dagger[\sigma_4])$ with  $w_0(\Sigma,H,\tau)w_0^{-1}\subset Q$
 and parametrize
  $$
 \sigma= 
 \begin{bmatrix}
c(\sigma)&b(\sigma)\\
b(\sigma)&0
 \end{bmatrix}\in\Sigma,
 \qquad
 h=
 \begin{bmatrix}
\alpha(h)&0\\
\beta(h)\alpha(h)&\delta(h)
 \end{bmatrix}\in H.
 $$
 Then  $(\Sigma',H',\tau'):=w_0(\Sigma,H,\tau)w_0^{-1}$ is as follows: $\Sigma'$ consists of all the matrices 
 \begin{equation}
\sigma'=\begin{bmatrix}
c(\sigma)+b(\sigma)\beta(h)&\beta(h)\\
\beta(h)&0
\end{bmatrix}
\label{sigmaprime}
\end{equation}
as  $g_\tau(\sigma,h)$ varies in the subset of $(\Sigma,H,\tau)$ whose elements have  the form
\begin{equation}
\sigma=\begin{bmatrix}c(\sigma)&-a_0\beta(h)\\-a_0\beta(h)&0\end{bmatrix}
\qquad
h=\begin{bmatrix}1&0\\-\beta(h)&1\end{bmatrix},
\label{specialSH}
\end{equation}
the group $H'$ consists of all the matrices 
 \begin{equation}
h'=\begin{bmatrix}
\alpha(h)&0\\
-(a_0\beta(h)+b(\sigma))\alpha(h)&\delta(h)^{-1}
\end{bmatrix}
\label{hprime}
\end{equation}
as  $g_\tau(\sigma,h)$ varies freely  in $(\Sigma,H,\tau)$, and $\tau'$ is not 
necessarily\footnote{See the following Lemma~\ref{bruhat4} for further information on $\tau'$.} in $\cT$.
 \end{lemma}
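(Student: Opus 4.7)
The plan is to carry out the conjugation $w_0 g_\tau(\sigma,h) w_0^{-1}$ by direct block-matrix computation. Writing
$g_\tau(\sigma,h)=\begin{bmatrix} h & 0 \\ \rho h & h^\sharp\end{bmatrix}$ with $\rho=\sigma+\tau(h)$, and
$w_0=\begin{bmatrix} s_0 & -s_1 \\ s_1 & s_0\end{bmatrix}$,
$w_0^{-1}=\begin{bmatrix} s_0 & s_1 \\ -s_1 & s_0\end{bmatrix}$, I expand the triple product into four $2\times 2$ blocks, expressed as polynomial combinations of $s_0,s_1,h,h^\sharp,\rho$. The key simplifications come from $s_0^2=s_0$, $s_1^2=s_1$, $s_0s_1=s_1s_0=0$, together with $s_0 h s_1=0$ and $s_1 h^\sharp s_0=0$ (since $h\in T$ is lower triangular and $h^\sharp$ is upper triangular). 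These collapse several cross-terms and make the upper-right block computable in closed form.

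Next I impose the hypothesis $w_0(\Sigma,H,\tau)w_0^{-1}\subset Q$, which forces that upper-right block to vanish. Under the parametrizations of the statement the block reduces to $\begin{bmatrix}0&0\\0&-a(\rho)\delta(h)\end{bmatrix}$, hence the constraint is $a(\rho)\equiv 0$ on $(\Sigma,H,\tau)$. Since $\sigma\in\sigma_4^\perp$ gives $a(\sigma)=0$ and a short direct computation shows $a(h^\dagger[\sigma_4])=\delta(h)^{-2}$, this reduces to $a_0\bigl(1-\delta(h)^{-2}\bigr)=0$ for every $h\in H$. In the connected setting this means either $a_0=0$ or $\delta(h)\equiv 1$; in either case the obstruction vanishes automatically and nothing further needs to be assumed.

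With the output in $Q$ secured, the remaining three blocks identify $(h',\sigma' h',(h')^\sharp)$. The upper-left block yields
$h'=\begin{bmatrix}\alpha(h)&0\\-b(\rho)\alpha(h)&\delta(h)^{-1}\end{bmatrix}$, and the identity $b(\rho)=b(\sigma)+a_0\beta(h)$ (which holds whenever $\delta(h)=1$ or $a_0=0$) converts this into the announced form \eqref{hprime}. The lower-left block, divided on the right by $h'$, gives $\sigma'$; the claimed simplification to \eqref{sigmaprime} comes from a cancellation between the quadratic contribution $-a_0\beta(h)^2$ appearing in $c(\rho)$ and a matching term produced in the passage from $\sigma' h'$ to $\sigma'$, so that only the $\sigma$-parameters (not the $\rho$-parameters) survive in the final expression.

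Finally, the description of $\Sigma'$ is obtained by imposing $h'=I_2$ in \eqref{hprime}: this forces $\alpha(h)=1$, $\delta(h)=1$ and $b(\sigma)=-a_0\beta(h)$, which is precisely the selection \eqref{specialSH}; the description of $H'$ then follows because $h'$ ranges over $\pi(w_0Gw_0^{-1})$ as $(\sigma,h)$ varies freely. The main point of care, rather than a true obstacle, is the bookkeeping needed to keep $\sigma$-parameters distinct from $\rho$-parameters throughout the calculation, so that the cancellation yielding \eqref{sigmaprime} is transparent; the explicit shape of $\tau'$ is deliberately left open here, being the subject of the subsequent Lemma~\ref{bruhat4}.
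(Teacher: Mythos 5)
Your proposal is correct and follows essentially the same route as the paper: a direct computation of $w_0 g_\tau(\sigma,h)w_0^{-1}$, identification of the vanishing of the upper-right block with the condition $a_0(\delta-\delta^{-1})=0$, and reading off $H'$ from the upper-left block and $\Sigma'$ from the lower-left block after imposing $h'=I_2$, which is exactly the selection \eqref{specialSH}. The only difference is organizational --- you carry out the computation in $2\times2$ blocks using the projections $s_0,s_1$, whereas the paper writes out the full $4\times4$ matrix entrywise --- so no substantive comparison is needed.
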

 \begin{proof}  With our notation, but omitting the various dependencies, we have
 $$
 g_{\tau}(\sigma,h)
 =\begin{bmatrix}
 \alpha&0&0&0\\
 \beta\alpha&\delta&0&0\\
 (c+b\beta)\alpha&b\delta+a_0\beta\delta^{-1}&\alpha^{-1}&-\beta\delta^{-1}\\
( b+a_0\beta)\alpha&a_0(\delta-\delta^{-1})&0&\delta^{-1}
 \end{bmatrix},
 $$
 and hence
 \begin{equation}
w_0g_{\tau}(\sigma,h)w_0^{-1}
 =\begin{bmatrix}
 \alpha&0&0&0\\
 -( b+a_0\beta)\alpha&\delta^{-1}&0&a_0(\delta-\delta^{-1})\\
 (c+b\beta)\alpha&\beta\delta^{-1}&\alpha^{-1}&b\delta+a_0\beta\delta^{-1}\\
 \beta\alpha&0&0&\delta
 \end{bmatrix},
\label{bigmatrix}
\end{equation}
  Rember that the hypothesis  $w_0(\Sigma,H,\tau)w_0^{-1}\subset Q$,
 hence of the form $(\Sigma',H',\tau')$, is equivalent to requiring that 
 $a_0(\delta-\delta^{-1})=0$ (see the proof of Lemma~\ref{bruhat}). 
The upper-left $2\times2$  block is as in \eqref{hprime}, and by setting it to be equal to $I_2$, the lower-left $2\times2$ block is  \eqref{sigmaprime}, and this happens if and only if $g_{\tau}(\sigma,h)$ is as in \eqref{specialSH}. 
 \end{proof}
 
 \begin{remark}\label{nOmega} Observe that  case (i) of Proposition~\ref{Wcanonical} is ruled out from our classification problem by the above lemma. Indeed, in that case, $c(\sigma)=0$ and $h$ is diagonal, so the group elements satisfying \eqref{specialSH} have $\beta(h)=0$, whence $\sigma'=0$.
  \end{remark}
\begin{remark}\label{therest} In both the remaining two cases ($\Sigma=\Sigma_3$ and $\Sigma=\sigma_4^\perp$), we can always take $\beta(h)=0$
and $c(\sigma)=1$ in \eqref{specialSH}. Therefore we always obtain that $\sigma_3\in\Sigma'$. 
As a result, the only canonical groups that are possibly conjugate to other groups in the class $\cE$ are those listed in Proposition~\ref{list3}, because $\sigma_5\Sigma_3^\perp \sigma_5=\sigma_4^\perp$.
 \end{remark}
 
Finally, we complete the picture drawn in Lemma~\ref{bruhat3}.

\begin{lemma}\label{bruhat4}
Hypotheses and notation as in Lemma~\ref{bruhat3}, with either $\Sigma=\Sigma_3$ or $\Sigma=\sigma_4^\perp$. For every pair of real numbers $a',b'$ define the function
\begin{equation*}
\Psi(\sigma,h)=
\beta(h)-b'\left(1-\delta(h)\alpha(h)^{-1}\right)+a'\left(a_0\beta(h)+b(\sigma)\right)\delta(h)^2.
\label{betatau}
\end{equation*}
Then  $\tau'\in\cT$ if and only if there exist $a',b'$ such that   for all $h\in H$ and all  $\sigma\in\Sigma$ 
\begin{equation}
a'(1-\delta^2(h))=0,
\qquad
\begin{bmatrix}1&0\\\Psi(\sigma,h)&1\end{bmatrix}\in H,
\qquad
a_0\Psi(\sigma,h)\sigma_5\in\Sigma.
\label{crazytau}
\end{equation}
In this case, the symmetric matrix associated to $\tau'$ is 
$\begin{bmatrix}0&b'\\b'&a'\end{bmatrix}$.
\end{lemma}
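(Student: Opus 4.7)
The plan is to derive, from the explicit matrix \eqref{bigmatrix} in Lemma~\ref{bruhat3}, a candidate section $\tau':H'\to\Symdr$, and then to translate the requirement $\tau'\in\cT$---equivalently, by Remark~\ref{equivtau}, the existence of a symmetric $\tau_0'=\begin{bmatrix}c'&b'\\b'&a'\end{bmatrix}$ with $\tau'(h')\equiv\tau_0'-(h')^\dagger[\tau_0']\pmod{\Sigma'}$---into conditions on $\tau_0'$ and on the pair $(\Sigma,H)$. Multiplying the lower-left $2\times2$ block of \eqref{bigmatrix} on the right by $(h')^{-1}$ and using $a_0(\delta(h)-\delta(h)^{-1})=0$, the ``total'' section takes the form
$$
\sigma_{\mathrm{tot}}(\sigma,h)=\sigma'+\tau'(h')=\begin{bmatrix}c(\sigma)+2b(\sigma)\beta(h)+a_0\beta(h)^2 & \beta(h)\\ \beta(h) & 0\end{bmatrix}.
$$
I would then evaluate $\tau_0'-(h')^\dagger[\tau_0']$ after substituting $\alpha(h')=\alpha(h)$, $\delta(h')=\delta(h)^{-1}$ and $\beta(h')=-(b(\sigma)+a_0\beta(h))$ from \eqref{hprime}.

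Setting $D:=\sigma_{\mathrm{tot}}-(\tau_0'-(h')^\dagger[\tau_0'])$, the condition $\tau'\in\cT$ amounts to $D\in\Sigma'$ for every $(\sigma,h)\in\Sigma\times H$, and three pieces of information fall out from this single requirement. Since $\Sigma'\subseteq\sigma_4^\perp$, the $(2,2)$ entry of $D$ must vanish, and a short simplification shows this is exactly $a'(1-\delta(h)^2)=0$. A direct componentwise calculation shows that the $(1,2)$ entry of $D$ coincides with the function $\Psi(\sigma,h)$ of \eqref{betatau}. Comparing $D$ with the parametrization of $\Sigma'$ from \eqref{sigmaprime}--\eqref{specialSH}, we see that $D$ must equal some $\sigma'(\sigma_0,h_0)$ with $\beta(h_0)=\Psi(\sigma,h)$, which forces $\begin{bmatrix}1&0\\ \Psi(\sigma,h)&1\end{bmatrix}\in H$, and with $b(\sigma_0)=-a_0\Psi(\sigma,h)$, which forces $a_0\Psi(\sigma,h)\sigma_5\in\Sigma$ (automatic when $\Sigma=\sigma_4^\perp$, a genuine constraint when $\Sigma=\Sigma_3$). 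The $(1,1)$ entry imposes no further restriction, because the free parameter $c(\sigma_0)\in\R$ appearing in \eqref{sigmaprime} can always be adjusted to match whatever value is needed.

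To verify that $\tau_0'$ can always be chosen with $c'=0$, I would observe that replacing $\tau_0'$ by $\tau_0'-c'\sigma_3$ alters $\tau'$ by the function $h'\mapsto -c'\bigl(\sigma_3-(h')^\dagger[\sigma_3]\bigr)=-c'(1-\alpha(h')^{-2})\sigma_3$, and this takes values in $\Sigma'$ (take $\beta(h_0)=0$ and $c(\sigma_0)=-c'(1-\alpha(h')^{-2})$ in \eqref{sigmaprime}, noting that $\sigma_0=c(\sigma_0)\sigma_3\in\Sigma$ in either of the admissible cases). By Remark~\ref{equivtau} the two sections are equivalent, so one may take $c'=0$, and the symmetric matrix associated with $\tau'$ is $\begin{bmatrix}0&b'\\b'&a'\end{bmatrix}$, as claimed.

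The main obstacle will be bookkeeping rather than depth: three parallel parametrizations---the generic input $(\sigma,h)\in\Sigma\times H$, its image $(\sigma_{\mathrm{tot}},h')$ under the conjugation by $w_0$, and the special representative $(\sigma_0,h_0)$ from \eqref{specialSH} needed to describe a generic element of $\Sigma'$---must be kept consistently in play throughout. Once they are separated, the argument reduces to a routine componentwise comparison of two symmetric $2\times 2$ matrices, yielding precisely the three conditions of \eqref{crazytau}.
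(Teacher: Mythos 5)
Your proposal is correct and follows essentially the same route as the paper: form $\omega(\sigma,h)=\sigma'+\tau'(h')$ from the lower-left block of \eqref{bigmatrix}, subtract the coboundary $\tau_0'-(h')^\dagger[\tau_0']$, and read off the three conditions of \eqref{crazytau} entrywise using the parametrization of $\Sigma'$ from Lemma~\ref{bruhat3}. Your explicit check that $c'$ may be normalized to $0$ (via the equivalence of Remark~\ref{equivtau}, since $\sigma_3-(h')^\dagger[\sigma_3]=(1-\alpha(h')^{-2})\sigma_3\in\Sigma'$) is a welcome small addition that the paper leaves implicit.
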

\begin{proof} Look at \eqref{bigmatrix}. The lower-left $2\times2$ block factors as
$$
\begin{bmatrix}c(\sigma)+2b(\sigma)\beta(h)+a_0\beta(h)^2&\beta(h)\\
\beta(h)&0\end{bmatrix}
\begin{bmatrix}\alpha(h)&0\\-(b(\sigma)+a_0\beta(h))\alpha(h)&\delta(h)^{-1}
\end{bmatrix},
$$
where the second is evidently $h'(\sigma,h)$. Now, $\tau'\in\cT$ if and only if the first factor, that we denote by  $\omega(\sigma,h)$, satisfies 
\begin{equation}
\omega(\sigma,h)-\tau'(h'(\sigma,h))
=\omega(\sigma,h)-\bigl(\tau'-h'(\sigma,h)^\dagger[\tau']\bigr)
\in\Sigma'
\label{SigmaPrime}
\end{equation}
for some symmetric (constant) matrix $\tau'$. For any such
$$
\tau'=\begin{bmatrix}c'&b'\\b'&a'\end{bmatrix}
$$
a direct computation gives that $h'(\sigma,h)^\dagger[\tau']$ is equal to
$$
\begin{bmatrix}
c'\alpha^{-2}+2b'\alpha^{-1}\delta\left(a_0\beta+b\right)+a'\delta^2\left(a_0\beta+b\right)^2
&\delta\left[b'\alpha^{-1}+a'\delta\left(a_0\beta+b\right)\right]\\
\delta\left[b'\alpha^{-1}+a'\delta\left(a_0\beta+b\right)\right]&a'\delta^2
\end{bmatrix}
$$
Now, the lower-right entry of $\omega(\sigma,h)-\bigl(\tau'-h'(\sigma,h)^\dagger[\tau']\bigr)$
is $a'(\delta^2-1)$, and must vanish. This is the first   condition in \eqref{crazytau}.
The upper-right entry of $\omega(\sigma,h)-\bigl(\tau'-h'(\sigma,h)^\dagger[\tau']\bigr)$
is precisely $\Psi(\sigma,h)$. As we have already observed, $\sigma_3\in\Sigma'$. Therefore
\eqref{SigmaPrime} holds true if and only if $\Psi(\sigma,h)\sigma_5\in\Sigma'$.
By Lemma~\ref{bruhat3}, this occurs if and only if the remaining two conditions in \eqref{crazytau}
are satisfied.
\end{proof}

\begin{remark}\label{azero} Notice that if there exists $h\in H$ such that $\delta(h)\not=1$ then both $a_0$ and $a'=0$.
\end{remark}

\begin{remark}\label{image} Lemma~\ref{bruhat4} expresses necessary and sufficient conditions for the conjugation via $w_0$ to send a group $(\Sigma,H,\tau)$ with $\tau\in\cT$, in a group of the same kind. The image group, however, is determined in  Lemma~\ref{bruhat3}
\end{remark}

\begin{remark}\label{easycheck} Notice that if we choose $a'=b'=0$, and $a_0=0$, then
\eqref{crazytau} is satisfied if and only if for every $h\in H$
$$
\begin{bmatrix}1&0\\\beta(h)&1\end{bmatrix}\in H.
$$
In this case, $\tau=\tau'=0$ and conjugation by $w_0$ sends the group $\Sigma\rtimes H\in\cE$ into another group in $\cE$, namely $\sigma_4^\perp\rtimes H'$.
\end{remark}

  \begin{proof}[Proof of Theorem~\ref{main}] 
    We are in a position to apply these results to our classification
    problem. We take a group $G\in\cE$ in canonical form and we want
    to know if it is conjugate to another such, or not.  By
    Lemma~\ref{bruhat}, we must find $g\in MA$ (and
    Lemma~\ref{bruhat2} tells us that any such choice is legitimate)
    such that $gGg^{-1}=\Sigma\rtimes H$, where either
    $\Sigma=\Sigma_3$ or $\Sigma=\sigma_4^\perp$, by
    Remark~\ref{nOmega}.  Hence it is enough to consider the groups in
    the list of Proposition~\ref{list3}.  At this point we look at $H$
    and check whether there are entries $\delta(h)\not=1$, in which
    case the condition $a_0(1-\delta(h)^2)=0$ forces $a_0=0$ and the
    first of \eqref{crazytau} forces $a'=0$. If not, we must allow for
    $a_0\not=0$ and $a'\not=0$. Next we verify if the various
    conditions in \eqref{crazytau} are satisfied for some $a',b'$. In
    this case, we know that
    $w_0(\Sigma,H,\tau)w_0^{-1}=(\Sigma',H',\tau')$ with both
    $\tau,\tau'\in\cT$, which is equivalent to saying that
    $\Sigma\rtimes H$ is conjugate to a $\Sigma'\rtimes H'$ yet to be
    determined. Finally, using Lemma~\ref{bruhat3} we find all the
    elements of the form \eqref{specialSH} and thus compute $\Sigma'$
    and $H'$ by means of \eqref{sigmaprime} and \eqref{hprime},
    respectively. The last step is to identify the $MA$-canonical form
    of $\Sigma'\rtimes H'$.

  We now apply the above procedure to the groups of
    Proposition~\ref{list3}.  Each of them is labelled  with the
    corresponding  item  in~Proposition~\ref{list3} and  the
    subsequent heading  enumerates, if they exist, all the
    possible conjugations with other groups listed in
    Proposition~\ref{list3} by writing $G_1\sim G_2$ to mean that
    $G_1$ and $G_2$ are are conjugate. 
With slight abuse of notation, we use
    $\sigma_5$ in place of $g(0,\sigma_5)$.  Recall that $\sigma_5
    \Sigma_3^\perp \sigma_5^{-1} = \sigma_4^\perp$. \vskip0.2truecm
    \paragraph{\bf (3.i):} $\Sigma_3\rtimes T^0\sim
    \Sigma_3^\perp\rtimes K_0(\sigma_3)$, {\em i.e.} item~(3.xiv).
    \begin{enumerate}[(i)]
    \item There is $h\in H$ such that $\delta(h)\neq 1$, hence
      $a_0=a'=0$;
    \item with the choice $\tau'=0$, $\Psi(\sigma,[\begin{smallmatrix}
        \alpha & 0 \\ \beta \alpha & \delta \end{smallmatrix}]
      )=\beta$ with $\beta\in\R$ and $[\begin{smallmatrix} 1 & 0 \\
        \beta & 1 \end{smallmatrix}]\in H$, hence \eqref{crazytau} are
      satisfied;
    \item $\Sigma'=\sigma_4^\perp$, $H'=K_0(\sigma_3)$ and $\sigma_5
      (\sigma_4^\perp\rtimes K_0(\sigma_3) )\sigma_5^{-1}=
      \Sigma_3^\perp\rtimes K_0(\sigma_3)$.
    \end{enumerate}
    \paragraph{\bf (3.ii):} $\Sigma_3\rtimes H_0(\sigma_3)$
   is not conjugate to other groups in the list.
    \begin{enumerate}[(i)]
    \item If $a_0=0$, then $H'=\{I_2\}$ since $\alpha(h)=\beta(h)=1$
      and $b(\sigma)=0$; hence $\Sigma'\rtimes\{I_2\}=\Sigma'$ can
      not be conjugate to an element of the class $\cE$ with $q\in Q$;
    \item if $a_0\not=0$, then $\Sigma'=\Sigma_3$ and
      $H'=H_0(\sigma_3)$.
    \end{enumerate}
    \paragraph{\bf (3.iii)} $\Sigma_3\rtimes H_1(\sigma_3)$  is not conjugate to other groups in the list.
    \begin{enumerate}[(i)]
    \item There is $h\in H$ such that $\delta(h)\neq 1$, hence
      $a_0=a'=0$.
    \item $\Psi(\sigma,[\begin{smallmatrix} e^t & 0 \\ t e^t &
        e^t \end{smallmatrix}] )=t$ with $t\in\R$, but
      $[\begin{smallmatrix} 1 & 0 \\ t & 1 \end{smallmatrix}]\notin H$
      if $t\neq 0$, hence \eqref{crazytau} are not satisfied.
    \end{enumerate}
    \paragraph{\bf (3.iv):}
    $\Sigma_3\rtimes H_\infty(\sigma_3)\sim \Sigma_3 \rtimes
    H_{-\frac12,0}(\sigma_3)$,  {\em i.e.} item~(3.v) with $\gamma=-\frac12$.
    \begin{enumerate}[(i)]
    \item There is $h\in H$ such that $\delta(h)\neq 1$, hence
      $a_0=a'=0$;
    \item with the choice $\tau'=0$, $\Psi(\sigma,[\begin{smallmatrix}
        e^t & 0 \\ 0 & e^t \end{smallmatrix}] )=0$, hence
      \eqref{crazytau} are trivially satisfied;
    \item $\Sigma'=\Sigma_3$ and $H'=\{[\begin{smallmatrix} e^t & 0 \\
        0 & e^{-t} \end{smallmatrix}]:t\in\R \} =
      H_{-\frac12,0}(\sigma_3).$
    \end{enumerate}
\paragraph{\bf (3.v) with $\gamma=-\frac12$:} see (3.iv).
    \paragraph{\bf (3.v) with $\gamma\neq-\frac12$:} $\Sigma_3\rtimes
    H_{\gamma,0}(\sigma_3)\sim \Sigma_3 \rtimes
    H_{-\frac{\gamma}{2\gamma+1},0}(\sigma_3)$.
    \begin{enumerate}[(i)]
    \item There is $h\in H$ such that $\delta(h)\neq 1$, hence
      $a_0=a'=0$;
    \item with the choice $\tau'=0$, $\Psi(\sigma,[\begin{smallmatrix}
        e^{\gamma t} & 0 \\ 0 & e^{(\gamma+1)t} \end{smallmatrix}]
      )=0$, hence \eqref{crazytau} are trivially satisfied;
    \item $\Sigma'=\Sigma_3$ and $H'=\{[\begin{smallmatrix} e^{\gamma
          t} & 0 \\ 0 & e^{-(\gamma+1)t } \end{smallmatrix}]:t\in\R \}
      = H_{-\frac{\gamma}{2\gamma+1},0}(\sigma_3)$.
    \end{enumerate}
    \paragraph{\bf (3.vi):} $\Sigma_3\rtimes K_0(\sigma_3)$  is not conjugate to other groups in the list.
    \begin{enumerate}[(i)]
    \item There is $h\in H$ such that $\delta(h)\neq 1$, hence
      $a_0=a'=0$;
    \item with the choice $\tau'=0$, $\Psi(\sigma,[\begin{smallmatrix}
        e^t & 0 \\ 0 & e^s \end{smallmatrix}] )=0 $, hence
      \eqref{crazytau} are trivially satisfied;
    \item $\Sigma'=\Sigma_3$ and $H'=K_0(\sigma_3)$, so that $w_0
      (\Sigma_3\rtimes K_0(\sigma_3))w_0^{-1}= \Sigma_3\rtimes
      K_0(\sigma_3)$.
    \end{enumerate}
    \paragraph{\bf (3.vii):}
    $\Sigma_3\rtimes K_\infty(\sigma_3) \sim \Sigma_3^\perp\rtimes
    H_{-\frac12,0}(\sigma_3)$, {\em i.e.} item~(3.xiii) with $\gamma=-\frac12$.
    \begin{enumerate}[(i)]
    \item There is $h\in H$ such that $\delta(h)\neq 1$, hence
      $a_0=a'=0$;
    \item with the choice $\tau'=0$, $\Psi(\sigma,[\begin{smallmatrix}
        e^t & 0 \\ s e^t & e^t \end{smallmatrix}] )=s$ with $s \in\R$
      and $[\begin{smallmatrix} 1 & 0 \\ s & 1 \end{smallmatrix}]\in
      H$, hence \eqref{crazytau} are satisfied;
    \item $\Sigma'=\sigma_4^\perp$, $H'=\{[\begin{smallmatrix} e^t & 0
        \\ 0 & e^{-t } \end{smallmatrix}]:t\in\R \}$ and $\sigma_5
      (\sigma_4^\perp\rtimes H')\sigma_5^{-1}= \Sigma_3^\perp\rtimes
      H_{-\frac12,0}(\sigma_3)$.
    \end{enumerate}

    \paragraph{\bf (3.viii) with $\gamma\neq-\frac12$:} $\Sigma_3\rtimes
    L_\gamma(\sigma_3) \sim \Sigma_3^\perp\rtimes
    H_{-\frac{\gamma+1}{2\gamma+1},0}(\sigma_3)$,  {\em i.e.} item~(3.xiii)
      with $\gamma\neq-\frac12$.
\paragraph{\bf (3.viii) with $\gamma=-\frac12$:} $\Sigma_3\rtimes L_{-1/2}(\sigma_3) \sim
    \Sigma_3^\perp\rtimes H_{\infty}(\sigma_3)$,  {\em
        i.e.} item~(3.xii). 

 The proof of the above two cases can be done simultaneously.
    \begin{enumerate}[(i)]
    \item If $\gamma\neq -1$, there is $h\in H$ such that
      $\delta(h)\neq 1$, hence $a_0=a'=0 $; if $\gamma=1$, we choose
      $a_0=0$ (see \ref{gamma1});
    \item with the choice $\tau'=0$, $\Psi(\sigma,[\begin{smallmatrix}
        e^{\gamma t} & 0 \\ s e^{\gamma t} & e^{(\gamma+1)
          t}\end{smallmatrix}] )=s$ with $s \in\R$ and
      $[\begin{smallmatrix} 1 & 0 \\ s & 1 \end{smallmatrix}]\in H$,
      hence \eqref{crazytau} are satisfied;
    \item $\Sigma'=\sigma_4^\perp$, $H'=\{[\begin{smallmatrix}
        e^{\gamma t} & 0 \\ 0 & e^{-(\gamma+1)t
        } \end{smallmatrix}]:t\in\R \}$ and
      $\sigma_5(\sigma_4^\perp\rtimes H' )\sigma_5^{-1} =
      \Sigma_3^\perp\rtimes
      H_{-\frac{\gamma+1}{2\gamma+1},0}(\sigma_3)$ if
      $\gamma\not=-1/2$ and $\sigma_5(\sigma_4^\perp\rtimes H'
      )\sigma_5^{-1} = \Sigma_3^\perp\rtimes H_{\infty}(\sigma_3)$ if
      $\gamma=-1/2$;
    \item\label{gamma1} if $\gamma=-1$ and $a_0\neq 0$, choose $b'=0$
      and $a'=-a_0^{-1}$ so that $\Psi(\sigma,h)=0$, but
      $\Sigma'=\Sigma_3$ and $H'=L_{-1}$ so that $w_0 (\Sigma_3\rtimes
      L_{-1}(\sigma_3))w_0^{-1}= \Sigma_3\rtimes L_{-1}(\sigma_3)$.
    \end{enumerate}
    \paragraph{\bf (3.ix):} $\Sigma_3^\perp \rtimes \,^tH^0(\sigma_3)$
     is not conjugate to other groups in the list.
    \begin{enumerate}[(i)]
    \item $\sigma_5 ^tH^0(\sigma_3) \sigma_5^{-1}= T^0$;
    \item $\Sigma'=\sigma_4^\perp$ and $H'=T^0$.
    \end{enumerate}
    \paragraph{\bf (3.x):} $\Sigma_3^\perp\rtimes \,^tH_0(\sigma_3)$  is not conjugate to other groups in the list.
    \begin{enumerate}[(i)]
    \item $\sigma_5 ^tH_0(\sigma_3) \sigma_5^{-1}= H_0(\sigma_3) $;
    \item $\Sigma'=\sigma_4^\perp$ and $H'=H_0(\sigma_3) $.
    \end{enumerate}
    \paragraph{\bf (3.xi):} $\Sigma_3^\perp\rtimes \, ^t
    H_1(\sigma_3)$  is not conjugate to other groups in the list.
    \begin{enumerate}[(i)]
    \item $\sigma_5 ^tH_1(\sigma_3)\sigma_5^{-1}= H_1(\sigma_3)$;
    \item There is $h\in H$ such that $\delta(h)\neq 1$, hence
      $a_0=a'=0$;
    \item $\Psi(\sigma,[\begin{smallmatrix} e^t & 0 \\ t e^t &
        e^t \end{smallmatrix}] )=t$ with $t\in\R$, but
      $[\begin{smallmatrix} 1 & 0 \\ t & 1 \end{smallmatrix}]\notin H$
      if $t\neq 0$, hence \eqref{crazytau} are not satisfied.
    \end{enumerate}
    \paragraph{\bf (3.xii) with $\gamma=-\frac12$: }   see  (3.viii).
    \paragraph{\bf (3.xiii) with $\gamma\neq-\frac12$:}  see (3.viii)
      with $\gamma\neq-\frac12$.
    \paragraph{\bf (3.xiv):} see (3.i).
    \paragraph{\bf (3.xv) :}    $\Sigma_3^\perp\rtimes\,^tK_{\infty}(\sigma_3)\sim\Sigma_3^\perp\rtimes
    \,^tL_{-\frac12}(\sigma_3)$ {\em i.e.} item~(3.xvi) with $\gamma=-\frac12$.
    \begin{enumerate}[(i)]
    \item $\sigma_5 ^tK_{\infty}(\sigma_3)\sigma_5^{-1}=
      K_{\infty}(\sigma_3)$;
    \item There is $h\in H$ such that $\delta(h)\neq 1$, hence
      $a_0=a'=0$;
    \item with the choice $\tau'=0$, $\Psi(\sigma,[\begin{smallmatrix}
        e^t & 0 \\ s e^t & e^t \end{smallmatrix}] )=s$ with $s \in\R$
      and $[\begin{smallmatrix} 1 & 0 \\ s & 1 \end{smallmatrix}]\in
      H$, hence \eqref{crazytau} are satisfied;
    \item $\Sigma'=\sigma_4^\perp$, $H'=\{[\begin{smallmatrix} e^t & 0
        \\ s e^t & e^{-t } \end{smallmatrix}]:t\in\R \}$ and $\sigma_5
      (\sigma_4^\perp\rtimes H')\sigma_5^{-1}= \Sigma_3^\perp\rtimes
      \,^t L_{-\frac12}(\sigma_3)$.
    \end{enumerate}
\paragraph{\bf (3.xvi) with $\gamma=-\frac12$: see  (3.xv).}
    \paragraph{\bf (3.xvi) with $\gamma\neq-\frac12$:}
    $\Sigma_3^\perp\rtimes \,^tL_{\gamma}(\sigma_3) \sim
    \Sigma_3^\perp\rtimes \,^tL_{-\frac{\gamma}{2\gamma+1}}(\sigma_3)$.
    \begin{enumerate}[(i)]
    \item $\sigma_5 ^t L_{\gamma}(\sigma_3) \sigma_5^{-1}=
      \{[\begin{smallmatrix} e^{(\gamma+1) t} & 0 \\ s e^{(\gamma+1)
          t}& e^{\gamma t} \end{smallmatrix}]
      :s,t\in\R\}=L_{-(\gamma+1)}(\sigma_3)$;
    \item with the choice $\tau'=0$, $\Psi(\sigma,[\begin{smallmatrix}
        e^{(\gamma+1) t} & 0 \\ s e^{(\gamma+1) t}& e^{\gamma
          t} \end{smallmatrix}] )=s$ with $s \in\R$ and
      $[\begin{smallmatrix} 1 & 0 \\ s & 1 \end{smallmatrix}]\in H$,
      hence \eqref{crazytau} are satisfied;
    \item $\Sigma'=\sigma_4^\perp$, $H'=\{[\begin{smallmatrix}
        e^{(\gamma+1) t} & 0 \\ s e^{(\gamma+1) t} & e^{-\gamma t
        } \end{smallmatrix}]:t,s\in\R \}$ and, since
      $\gamma\not=-1/2$,
$$ 
\sigma_5 H' \sigma_5^{-1} = \{[\begin{smallmatrix} e^{-\gamma t } & s
  e^{(\gamma+1) t} \\ 0 & e^{(\gamma+1) t} \end{smallmatrix}]:t,s\in\R
\} = \,^t\!L_{-\frac{\gamma}{2\gamma+1}}(\sigma_3).
$$
\end{enumerate}
\end{proof}

\appendix
\section{The parabolic group $Q$ and its subgroups}\label{sec:parabolic}
 In this section we characterize the Lie subgroups of $Q$.  We need
 the ``if part''  to study the conjugation in $\Spdr$. However we
 think that the result is of some independent interest and we hope that it might help
 to classify all the reproducing groups of  $Q$. 

Here, we denote by $\pi:Q\to\Gldr$ 
the  smooth group homomorphism $g(\sigma,h)\mapsto h$. 
\begin{prop}\label{triplette} 
Take $G$ be a Lie subgroup of $Q$ and define 
\begin{equation}
  \label{tripledef}
  H=\pi(G) \qquad \Sigma=\{\sigma\in\Symdr: g(\sigma,I)\in G\}.
\end{equation}
Then $H$ is a Lie subgroup of $\Gldr$, $\Sigma$ is a Lie subgroup of $\Symdr$ which is invariant under the action  $\sigma\mapsto h^\dag[\sigma]$ and  there exists a measurable map $\tau:H\to\Symdr$ that satisfies
\begin{align}
&\tau(I)=0\nonumber\\
&\tau(h)+h^{\dagger}[\tau(h')]-\tau(hh')\in\Sigma\label{cocycle}
\end{align}
for every $h,h'\in H$. The triple $(\Sigma, H,\tau)$ identifies the group, in the sense that
 \begin{equation}
   \label{Gdef}
   G = \{g(\sigma+\tau(h),h):\sigma\in\Sigma,\, h\in H\}.
 \end{equation}
Conversely,  if $(\Sigma,H,\tau)$ is any such triple, then   $G$ as in~\eqref{Gdef}  is a Lie subgroup of $Q$ satisfying~\eqref{tripledef}. 
\end{prop}

\begin{proof} The first part is essentially known, see  \cite{dk00}, Proposition~1.11.8. We sketch the main steps. Take a Lie subgroup $G$ of $Q$. A standard result on Lie groups, see e.~g.
Theorem 2.7.3 in \cite{raja84}, ensures that $H:=\pi(G)$ is a Lie subgroup  of $\Gldr$.  
Since $\ker(\pi)$ is closed  in $G$, hence a Lie subgroup of $Q$, the set
$$
\Sigma=\bigl\{\sigma\in\Symdr:g(\sigma,e)\in G\bigr\}\simeq\ker(\pi)
$$
is a Lie subgroup of $\Symdr$,  and is contained in $H(\Sigma)$ (recall \eqref{largest}) because $\ker(\pi)$ is normal in $G$. 
The quotient Lie group $H=G/\ker(\pi)$ admits a global measurable
section $s:H\to G$ 
that maps $I$ to $g(0,I)$ (see \cite{mackey52} or \cite{raja85}). Since $G\subset Q$, we
may write 
$s(h)=g(\tau(h),h)$. Therefore, if $g\in G$, then we may write $g=g(\sigma+\tau(h),h)$, where $h=\pi(g)$ and $\sigma\in\Sigma$. Since $G$ is a group, the product  \eqref{semidirect} shows that 
\begin{equation}
\sigma+h^\dagger[\sigma']+\left(\tau(h)+h^{\dagger}[\tau(h')]-\tau(hh')\right)\in\Sigma
\label{sympiece}
\end{equation}
so that $\tau(h)+h^{\dagger}[\tau(h')]-\tau(hh')\in\Sigma$.

Conversely, fix a triple $(\Sigma,H,\tau)$ as in the statement. We
prove that there exists a Lie subgroup $G$ of $Q$ such that \eqref{tripledef} holds. Define
$G$ as in \eqref{Gdef},  a subgroup of $\Spdr$ because
$$
g(\sigma+\tau(h),h)g(\sigma'+\tau(h'),h')
=g(\sigma+h^\dagger[\sigma']+\left(\tau(h)+h^{\dagger}[\tau(h')]-\tau(hh')\right)+\tau(hh'),hh')
$$
and by the assumptions
$$
\sigma+h^\dagger[\sigma']+\left(\tau(h)+h^{\dagger}[\tau(h')]-\tau(hh')\right)\in\Sigma.
$$
A similar argument applies to inverses.
In order to prove that $G$ is a Lie subgroup, we follow this strategy: first we show that $G$ is a
standard Borel group\footnote{For notation and basic results on these issues, see \cite{raja85}, Chapter~VIII.} with an invariant $\sigma$-finite measure. As a
consequence of a theorem of Mackey's, we will be able to endow $G$ with the  Weil topology, so that 
$G$ becomes  a locally compact second countable
group. Finally, applying a classical result on Lie groups we see that $G$
admits a unique smooth structure converting it into a Lie subgroup of
$\Spdr$.

We claim that $G$ is  a Borel subset of  $\Spdr$.  Since $\Sigma$ and
$H$  are Lie groups,  they are standard Borel spaces with respect to
the corresponding Borel $\sigma$-algebras $\cB(\Sigma)$ and $\cB(H)$. Hence
the  product $\Sigma\times H$ is a standard Borel space with respect
to $\cB(\Sigma)\otimes\cB(H)$ and the injection $\xi:\Sigma\times
H\to\Spdr$, $\xi(\sigma,h)= g(\sigma+\tau(h),h)$, is a Borel
measurable map.  Since $\xi$ is a one-to-one map from a standard Borel
space into another standard Borel space, its range $G$ is a Borel
subset of $\Spdr$ and $\xi$ is a Borel isomorphism  from $\Sigma\times
H$ onto $G$, the latter being endowed with the restriction of $\cB(\Spdr)$. 

We choose  (left) Haar measures $d\sigma$ and $dh$ on $\Sigma$ and
$H$, respectively.  For any fixed $h\in H$, the map $\sigma\mapsto h^\dag[\sigma]$ is a group
homomorphism of $\Sigma$ onto itself, so that the image measure of
$d\sigma$ under $h^\dag[\cdot]$ is again a Haar measure. Hence there
exists a unique $\alpha(h)>0$ such that for all positive Borel measurable
functions $\varphi$ on $\Sigma$
$$
\int_{\Sigma} \varphi(h^\dag[\sigma])d\sigma= \alpha(h)  \int_{\Sigma}\varphi(\sigma)d\sigma.
$$
Since $h\mapsto \int_{\Sigma} \varphi(h^\dag[\sigma])d\sigma$ is Borel
measurable, so is $h\mapsto\alpha(h)$. Furthermore, the uniqueness of
$\alpha(h)$ implies that $h\mapsto \alpha(h)$ is a group homomorphism of
$H$, that is,  $\alpha$ is a continuous positive character of $H$.  
Write $dg$ as the image measure of the  measure $\alpha\cdot d\sigma\otimes dh$
under $\xi$. 
We claim that $dg$  is a $G$-invariant  $\sigma$-finite measure on $G$. Since
both $\Sigma$ and $H$ are $\sigma$-compact and $\alpha$ is continuous,
then $\alpha\cdot d\sigma\otimes dh$ is $\sigma$-finite as well
as $dg$. Moreover, for any  positive Borel measurable
function $\varphi$ on $G$ and $g_0=g(\sigma_0+\tau(h_0),h_0)\in G$
\begin{align*}
  \int_{G} \varphi(g_0g) dg& = \int_{\Sigma\times H}
  \varphi(g(\sigma_0+h_0^\dag[\sigma]+\tau(h_0)+h_0^\dag[\tau(h)], h_0h)) \alpha(h)
d\sigma\,dh \\
& = \int_{H}\int_{\Sigma}
  \varphi(g({\sigma_0+\sigma'+\tau(h_0)+h_0^\dag[\tau(h)]}, h_0h))
\alpha(h_0 h)d\sigma'\,dh  \\
& =\int_H \int_{\Sigma}
  \varphi(g(\sigma'' +\tau(h_0h), h_0h)) \alpha(h_0 h)d\sigma''\,dh\\
& =\int_{\Sigma}\int_H \varphi(g(\sigma'' +\tau(h'), h'))
\alpha( h)d\sigma''\,dh'=\int_{G} \varphi(g) dg,
\end{align*}
where the equality in the second line is due to Fubini's theorem, the
change of variable $h_0^\dag[\sigma]=\sigma'$ and the fact that
$\alpha$ is a character; the equality in the third line
is a consequence of the fact that  $d\sigma$ is the Haar measure on
$\Sigma$; finally, the fourth line follows by Fubini's theorem, the change of
variable $h'=h_0h$ and the $H$-invariance of $dh$. 

Next we apply the theorem of Mackey's, see for example Theorem 8.41 of \cite{raja85}, that
states that there exists exactly one topology on $G$ which converts it into a
locally compact second countable space whose Borel structure is
the original one.  From now on, we regard $G$ as endowed with this  topology. 

Clearly,
the inclusion $i$ of $G$ into $\Spdr$ is a  Borel measurable group
homomorphism. Hence $i$ is continuous, (see Lemma 8.28 of
\cite{raja85}). Finally, by Proposition~1 Ch. IV \S. XIV in \cite{che46}, there
exists exactly one $C^\infty$-structure on $G$ which converts it into a
Lie group and Proposition~1 Ch. IV \S.~XII in \cite{che46} implies that the inclusion
is a $C^\infty$-map. Hence, $G$ is a Lie subgroup of $\Spdr$.
\end{proof}

\begin{remark}\label{equivtau}
The correspondence between the triples
$(\Sigma,H,\tau)$ and the Lie subgroups $G$ of $Q$ is not one-to-one. Indeed,
two different maps $\tau$ and $\tau'$ define the same group $G$ if and
only if $\tau'(h)-\tau(h)\in\Sigma$ for all $h\in H$ and, if
this happens, we say
that $\tau$ and $\tau'$ are equivalent. From now on we thus parametrize the Lie subgroups of $Q$ writing $G=(\Sigma,H,\tau)$, with the understanding that $\tau$ is only defined up to equivalence.
\end{remark}

\begin{remark} One could go about the proof of Proposition~\ref{triplette} in a different way, using the standard result according to which, under the foregoing assumptions, there exists a locally defined smooth section, hence one could assume $\tau$ to be smooth around the identity, and then use this to define a smooth atlas on $G$ via translations.
\end{remark}

\begin{remark}
 The problem of characterizing the Lie subgroups of $Q$ can be stated
 in a slightly different form, in the framework of Lie group extensions \cite{hoc51}.  Since $Q$ is the semi-direct product of
 $\Symdr$ and $\Gldr$, $Q$ is a  (Lie group) extension of $\Symdr$ by
 $\Gldr$. In the language of group extensions,  $i_0:\Symdr\to Q$ (the canonical injection) and
 $\pi_0:Q\to \Gldr$ (the  canonical surjection) give rise to a short exact sequence, that is $i_0(\Symdr)=\ker\pi_0$.\\
Proposition~\ref{triplette} shows that any Lie subgroup $G$ of $Q$ is a  Lie group
 extension of  $\Sigma=\ker\pi$ (a Lie subgroup of $\Symdr$) by $H=\pi(G)$ (a Lie subgroup of $\Gldr$). Furthermore, the canonical inclusion $j$ is a
 group homomorphism of $G$ into $Q$ compatible with $i_0$
 and $\pi_0$, in the sense that the diagram
 $$
 \begin{CD} 
 \Sigma @>i>> G @> \pi>>H\\
 @VV V @VVjV@VV V\\
\Symdr@>i_0>> Q @>\pi_0>>  \Gldr \end{CD}
 $$
 \\
 commutes, where the vertical arrows are the natural inclusions. The factor sets corresponding to the extension $G$ are:
the map  
$$
(h,h')\mapsto \tau(h)+h^{\dagger}[\tau(h')]-\tau(hh')
$$
from $H\times H$ into $\Sigma$ and the map $h\mapsto h^\dag[\cdot]$ from $H$ into the group automorphisms of $\Sigma$.
Conversely,  for any pair $(G,j)$ where $G$ is a Lie group extension of
a  Lie subgroup of $\Symdr$ by  a Lie subgroup of $\Gldr$  and where $j:G\to Q$
is a group homomorphism compatible with both $i_0$ and
$\pi_0$,  $j(G)$ turns out to be a Lie subgroup of  $Q$. 

For any fixed $\Sigma$ and $H$, the  maps $\tau$
satisfying~\eqref{cocycle} characterize all the extensions $G$ of $\Sigma$
by $H$ for which there is a  group homomorphism compatibile with $i_0$ and
$\pi_0$.
\end{remark}

\begin{remark}\label{zero} Several special instances of
  \eqref{cocycle} are of interest. The easiest  is when $\tau$ is
  (equivalent to) zero, a case that plays a prominent r\^ole in our
  paper. When this happens, $G$ becomes the semi-direct product
  $\Sigma\rtimes H$, because \eqref{sympiece} reduces to
  $\sigma+h^\dagger[\sigma']$. The family of subgroups of $Q$ for
  which  $\tau=0$ and both factors are connected and nontrivial, is
  precisely the class $\cE$. 
\end{remark}
The following class of triples is one of the
  technical tools on which  the study of conjugation by an element in
  $\Spdr$, which is not in $MA$, is based. In particular, we introduce the class
$\mathcal T$ of maps $\tau$ we use in Section~\ref{full}.
\begin{remark}\label{tauzero} The next simpler case is when $\tau(h)=\tau_0-h^\dagger[\tau_0]$ for some  $\tau_0\in\Symdr$. The class of maps $\tau$ of this kind will be denoted by $\cT$. This happens  if and only if  we conjugate a group $\Sigma\rtimes H$  by means of $g(\tau_0,I)$:
$$
g(\tau_0,I)g(\sigma,h)g(\tau_0,I)^{-1}=g(\sigma+\tau_0-h^\dagger[\tau_0],h).
$$
We shall often identify the functions $\tau\in\cT$ with the symmetric matrices that uniquely determine them.
For example, we can take
$$
\Sigma=\bigl\{\begin{bmatrix}x&0\\0&0\end{bmatrix}:x\in\R\bigr\},
\qquad
H=\Bigl\{\begin{bmatrix}1&0\\y&1\end{bmatrix}:y\in\R\Bigr\},
$$
thereby obtaining $\Sigma\rtimes H$, consisting of the symplectic matrices
$$
\begin{bmatrix}1&0&0&0\\
y&1&0&0\\
x&0&1&-y\\
0&0&0&1
\end{bmatrix}.
$$
If we conjugate $\Sigma\rtimes H$ with $g(\tau_0,I)$, where $\tau_0$ is the symmetric matrix
$$
\tau_0=\begin{bmatrix}1&0\\0&-1\end{bmatrix},
$$
we obtain 
$$
G=\Bigl\{
\begin{bmatrix}1&0&0&0\\
y&1&0&0\\
x+y^2&y&1&-y\\
2y&0&0&1
\end{bmatrix}:
x,y\in\R\Bigr\},
$$
a subgroup of $Q$ of the form $(\Sigma,H,\tau)$ for which $\tau$ is not equivalent to zero. 
\end{remark}
 For  completeness, we add two examples. The first shows 
that there exist connected  Lie subgroups of $Q$, which are not in the class
$\cE$. The second exhibits a normal abelian factor that is not connected, {\em i. e.} a case where
$\Sigma$ is not a subspace. We stress that this pathology does not
occur for  the triples with $\tau\in\mathcal T$ and, in particular,
for the groups in the class $\cE$ where $\tau=0$.
\begin{remark}\label{giovanni} A slightly more general  class of groups  $G=(\Sigma,H,\tau)$, that includes the previous one,  corresponds to maps $\tau$ that  satisfy 
$$
\tau(h)+h^{\dagger}[\tau(h')]-\tau(hh')=0.
$$
Then $H_{\tau}=\{g(\tau(h),h)\in G:
h\in H\}$ is a Lie subgroup of $Q$. Using the same arguments as those in the proof of Proposition~\ref{triplette}, one sees that $\tau$ is a
$C^\infty$ map from $H$ into $\Symdr$. Furthermore, $G$ is the semi-direct product
of $\Sigma$ and $H_{\tau}$, and is isomorphic (as Lie group) to
$\Sigma\rtimes H$ via the mapping $\Sigma\rtimes H\to G$ given by $(\sigma,h)\mapsto(\sigma+\tau(h),h)$. 
For example, take  
$$
G=\Bigl\{
\begin{bmatrix}e^t&0&0&0\\
0&e^{-t}&0&0\\
se^t&-te^{-t}&e^{-t}&0\\
-te^t&0&0&e^t
\end{bmatrix}:
t,s\in\R\Bigr\}.
$$
Here 
$$
h=\begin{bmatrix}e^t&0\\0&e^{-t}\end{bmatrix},
\qquad
\sigma=\begin{bmatrix}s&0\\0&0\end{bmatrix},
\qquad
\tau(h)=\begin{bmatrix}0&-t\\-t&0\end{bmatrix}.
$$
It is easily checked that $\tau$ is not of the form $\tau(h)=\tau_0-h^\dagger[\tau_0]$ for any symmetric $\tau_0$,  
but
$\tau(h)+h^{\dagger}[\tau(h')]-\tau(hh')=0$.

\end{remark}

\begin{remark}\label{connected}   If $G=(\Sigma,H,\tau)$ is connected, then so is $H$, but $\Sigma$ may well be disconnected. The statement concerning $H$ is clear, since $\pi$ is continuous. Consider 
$$
G=\{\begin{bmatrix}R_\theta&0\\\theta R_\theta&R_\theta\end{bmatrix}:\theta\in\R\}\subset Q.
$$
Clearly $G$ is connected, but \eqref{tripledef} tells us that $\Sigma=2\pi\mathbb Z$, and is therefore not connected.   

Finally, take $\tau:H\to {\rm Sym}(2,\R)$ such that $G=(\Sigma,H,\tau)$.  We show that it is not possible to choose $\tau$ is such a way that
$\tau(h)+h^{\dagger}[\tau(h')]-\tau(hh')=0$ for all $h,h'\in H$. Assuming the converse, 
then $h\mapsto g(\tau(h),h)$ is an injective  measurable (hence smooth)  group homomorphism of the compact group $H$ into $G$. However, $G$ is isomorphic to $\R$, so that it  does not have compact subgroups other than $\{0\}$. 
\end{remark}


\providecommand{\bysame}{\leavevmode\hbox to3em{\hrulefill}\thinspace}
\providecommand{\MR}{\relax\ifhmode\unskip\space\fi MR }
\providecommand{\MRhref}[2]{%
  \href{http://www.ams.org/mathscinet-getitem?mr=#1}{#2}
}
\providecommand{\href}[2]{#2}

\end{document}